\newcommand{\R}{\mathbb R}
\newcommand{\Z}{\mathbb Z}
\newcommand{\sgn}{\text{sgn}}
\numberwithin{equation}{section}
\newcommand{\be}{\begin{equation}}
\newcommand{\ee}{\end{equation}}
\newcommand{\bp}{\begin{proof}}
\newcommand{\ep}{\end{proof}}
\newcommand{\bel}{\begin{equation}\label}
\newcommand{\eeq}{\end{equation}}
\newtheorem{thm}{Theorem}[section]
\newtheorem{cor}[thm]{Corollary}
\newtheorem{lem}[thm]{Lemma}
\newtheorem{prop}[thm]{Proposition}
\theoremstyle{remark}
\newtheorem{rem}{Remark}[section]
\numberwithin{equation}{section}
\begin{document}
\title[BBM equation]{On special properties of solutions to the Benjamin-Bona-Mahony equation}
\author{C. Hong}
\address[C. Hong]{Department  of Mathematics\\
University of California\\
Santa Barbara, CA 93106\\
USA.}
\email{christianhong@ucsb.edu}

\author{G. Ponce}
\address[G. Ponce]{Department  of Mathematics\\
University of California\\
Santa Barbara, CA 93106\\
USA.}
\email{ponce@math.ucsb.edu}

\keywords{Nonlinear dispersive equation,  unique continuation, regularity }
\subjclass{Primary: 35Q35, 35B65 Secondary: 76B15 }

\begin{abstract} 
This work is concerned with the  Benjamin-Bona-Mahony equation. This model was deduced as an approximation to the Korteweg-de Vries equation in the description  of the unidirectional propagation of long waves. Our goal here is to study unique continuation and regularity properties on  solutions to the associated initial value problem and intial periodic boundary value problems.
\end{abstract}
\maketitle

\section{Introduction and main results}


The Korteweg-de Vries (KdV) equation 
\be
\label{KdV}
\partial_t u +\partial_x u +u\partial_xu +\partial^3_{x}u = 0,
\ee
was first derived as a model of long waves in shallow water \cite{kdv}. In \cite{BBM} Benjamin-Bona-Mahony proposed the (BBM) equation 
\be
\label{bbm-c}
\partial_t u +\partial_x u +u\partial_xu -\partial^3_{xxt}u = 0,
\ee
as a regularized version  of the KdV equation. 

Thus, for the BBM equation  its accuracy as an approximation to the KdV, the existence and stability of traveling waves, their interaction, the well-posedness and their asymptotic behavior of the solutions among many other aspects  have been extensively regarded, see \cite{BPS}, \cite{Ol}, \cite{SS} and \cite{BoTz} and references therein.

Firstly, we shall be concerned with the  initial value problem (IVP) associated to the formally equivalent form of the BBM
 \be\label{bbm} 
\begin{aligned}
\begin{cases}
& \partial_t u +(1-\partial_x^2)^{-1}(\partial_x u +u\partial_xu) = 0,\qquad (t,x) \in \R\times \R,\\
&u(x,0)=u_0(x).
 \end{cases}
\end{aligned}
\ee
The solution of the IVP \eqref{bbm} can be expressed in the equivalent integral form :
\be\label{bbm1} 
u(x,t)= U(t)u_0(x)-\int_0^t U(t-t')\partial_xJ^{-2}\Big(\frac{u^2}{2}\Big)(x,t')dt',
\ee
where $\{U(t)\,:\,t\in \mathbb R\}$ denotes the unitary group 
\be
\label{group}
U(t)f(x)=e^{t \partial_xJ^{-2}}f(x)=(e^{-2\pi it\xi(1+4\pi^2 \xi^2)^{-1}} \widehat{f}(\xi))^{\vee}(x)
\ee
with
\be
\label{bessel}
J^sf(x) =(1-\partial_x^2)^{s/2}f(x)=((1+4\pi^2\xi^2)^{s/2} \widehat{f}(\xi))^{\vee}(x),\;\,s\in\mathbb R.
\ee

\vskip.1in

In \cite{BoTz}, it was shown that the IVP \eqref {bbm} is globally well-posed in classical Sobolev spaces
\be
\label{sob-L2}
H^s(\mathbb R)=J^{-s/2}L^2(\mathbb R)=(1-\partial_x^2)^{-s/2}L^2(\mathbb R)\;\;\text{with}\;\;\,s\geq 0.
\ee

\begin{thm}[\cite{BoTz}]\label{BoTz}
 Let $s\geq 0$. For any $u_0\in H^s(\mathbb R)$ and for any $\,T>0$ there exists a unique solution $u=u(x,t)$ of the IVP \eqref{bbm} satisfying
 \be
 \label{per-BT}
 u\in C([-T,T]:H^s(\mathbb R)).
 \ee
Moreover, the map data solution, $u_0\to u$, is locally analytic from $ H^s(\mathbb R)$ to $C([-T,T]:H^s(\mathbb R)).$
\end{thm}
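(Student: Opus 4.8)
The plan is to derive both statements from the Banach contraction principle applied to the integral formulation \eqref{bbm1}, followed by a globalization argument based on the conservation laws of the equation. The only genuinely analytic ingredient needed for the local theory, even at the endpoint $s=0$, is the smoothing hidden in the nonlinear term: the Fourier multiplier of $\p_xJ^{-2}$ equals $2\pi i\xi(1+4\pi^2\xi^2)^{-1}$, which is $O(\langle\xi\rangle^{-1})$, so $\|\p_xJ^{-2}g\|_{H^s}\lesssim\|g\|_{H^{s-1}}$; combined with the one--dimensional Sobolev product law $H^s\cdot H^s\hookrightarrow H^{s-1}$ (for $s>1/2$ this is the algebra property of $H^s(\R)$, while at $s=0$ it is simply $L^2\cdot L^2\hookrightarrow L^1\hookrightarrow H^{-1}$, the last embedding being dual to $H^1(\R)\hookrightarrow L^\infty(\R)$) this gives the bilinear estimate
\be\label{keybil}
\|\p_xJ^{-2}(fg)\|_{H^s}\le c_s\,\|f\|_{H^s}\,\|g\|_{H^s},\qquad s\ge 0 .
\ee

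For the local theory, fix $s\ge 0$ and $u_0\in H^s(\R)$, set $R=\|u_0\|_{H^s}$, and on $X_T=C([-T,T]:H^s(\R))$ with $\|u\|_{X_T}=\sup_{|t|\le T}\|u(t)\|_{H^s}$ consider
\be\label{Phimap}
\Phi(u)(t)=U(t)u_0-\tfrac12\int_0^t U(t-t')\,\p_xJ^{-2}\big(u^2\big)(t')\,dt' .
\ee
Since $U(t)$ is unitary on every $H^s(\R)$, \eqref{keybil} yields $\|\Phi(u)\|_{X_T}\le R+c_sT\|u\|_{X_T}^2$ and $\|\Phi(u)-\Phi(v)\|_{X_T}\le c_sT\big(\|u\|_{X_T}+\|v\|_{X_T}\big)\|u-v\|_{X_T}$. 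Hence, for $T=T(R)\sim(1+R)^{-1}$ small enough, $\Phi$ is a contraction on the ball of radius $2R$ in $X_T$, and its unique fixed point is a solution of \eqref{bbm1} in $C([-T,T]:H^s(\R))$ with $T$ depending only on $\|u_0\|_{H^s}$ (continuity in $t$ coming from strong continuity of $U(\cdot)$ and dominated convergence). Uniqueness in $X_T$ and Lipschitz continuity of $u_0\mapsto u$ are immediate from the same two estimates. Finally, $(u_0,u)\mapsto\Phi(u)$ is affine in $u_0$ and a vector--valued quadratic polynomial in $u$, hence real analytic on $H^s(\R)\times X_T$, so the analytic implicit function theorem applied to $u-\Phi(u)=0$ (equivalently, summing the power series of the Picard iterates in $u_0$) shows that $u_0\mapsto u$ is locally analytic from $H^s(\R)$ into $C([-T,T]:H^s(\R))$.

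It remains to make $T$ arbitrary. Multiplying \eqref{bbm-c} by $u$ and integrating by parts gives, for smooth solutions, $\frac{d}{dt}\int_\R\big(u^2+(\p_xu)^2\big)\,dx=0$; approximating $H^1$ data by smooth data and passing to the limit with \eqref{keybil}, this conservation of the $H^1$--norm persists for $H^1$ solutions, which together with the local theory gives global well--posedness in $H^1(\R)$. For $s>1$ one supplements this with the energy inequality $\frac{d}{dt}\|u(t)\|_{H^s}\lesssim\|u(t)\|_{L^\infty}\|u(t)\|_{H^s}\lesssim\|u(t)\|_{H^1}\|u(t)\|_{H^s}$ (using $H^1(\R)\hookrightarrow L^\infty(\R)$), so $\|u(t)\|_{H^s}\le\|u_0\|_{H^s}\exp\!\big(c\|u_0\|_{H^1}|t|\big)$, which precludes finite--time blow--up of the $H^s$--norm; iterating the local construction then produces the solution on $[-T,T]$ for every $T$.

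The remaining range $0\le s<1$ is where real work is required, and I expect it to be the main obstacle: at this regularity the equation has no coercive conserved quantity, so one cannot feed a conservation law directly into the iteration. Here I would argue by approximation --- solve with smooth data $u^n_0\to u_0$ in $H^s(\R)$, obtaining global solutions $u^n$, and use \eqref{keybil} to control the differences $u^n-u^m$ --- the crux being to establish a separate a priori bound for $\sup_{|t|\le T}\|u(t)\|_{H^s}$, valid on every time interval; this is the delicate step carried out in \cite{BoTz}, exploiting the one--derivative gain in the nonlinearity. Once such a bound is available the local solution extends to all of $\R$, completing the proof.
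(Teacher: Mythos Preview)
This theorem is quoted from \cite{BoTz} and is not proved in the present paper, so there is no in-paper proof to compare yours against. That said, your outline of the local theory via the contraction principle and the bilinear smoothing estimate $\|\partial_xJ^{-2}(fg)\|_{H^s}\lesssim\|f\|_{H^s}\|g\|_{H^s}$ for $s\ge 0$ is correct and is essentially the argument of \cite{BoTz}; the analyticity of the data-to-solution map then follows, as you indicate, from the polynomial (quadratic) structure of the fixed-point map. Your globalisation for $s\ge 1$ via $H^1$-conservation plus a Gronwall estimate is also correct, with the minor amendment that the energy inequality should carry the linear term as well, $\tfrac{d}{dt}\|u\|_{H^s}\lesssim(1+\|u\|_{L^\infty})\|u\|_{H^s}$, which does not affect the conclusion. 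You rightly flag $0\le s<1$ as the range where no coercive conserved quantity is available and defer to \cite{BoTz} for the a~priori bound; since the present paper likewise invokes \cite{BoTz} without reproducing that step, there is nothing further to compare.
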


This strong notion of well-posedness  includes,  existence, uniqueness, persistence properties, i.e   if $u_0\in X$ then the solution $u=u(x,t)$ describes a continuous curve on $X$ so
$u\in C([0,T]:X)$, and regularity of the solution upon the data.

In \cite{BoTz}, it was also proven that the result in Theorem \ref{BoTz} is the best possible in the scale of Sobolev spaces $H^s(\mathbb R)$. More precisely, it was verified that in $H^s(\mathbb R)$, for any $s<0$, the map data-solution $u_0\to u(x,t)$  is not $C^2$. Hence, well-posedness cannot be obtained  in $H^s(\mathbb R),\;s<0$, by an argument based on the contraction principle. This ill-posedness was strengthened in \cite{Pa}.

In \cite{Wa}, the well-posedness in $H^s(\mathbb R), \,s\geq 0$, established in \cite{BoTz} was extended to the $L^p$-Sobolev spaces  :
\be
\label{sob-Lp}
L^{p}_s(\mathbb R)=J^{-s/2}L^p(\mathbb R)=(1-\partial_x^2)^{-s/2}L^p(\mathbb R),\;\;p\in[1,\infty),
\ee
with
\be
\label{sob-rest}
s\geq \max\big\{\frac{1}{p}-\frac{1}{2};0\big\}\;\;\text{if}\;\;p\in(1,\infty),\;\;\;\text{and}\;\;\;s>\frac{1}{2},\;\;\text{if}\;\;p=1.
\ee
Moreover, it was shown in \cite{Wa} that the well-posedness results obtained are optimal in the scale described in \eqref{sob-Lp}-\eqref{sob-rest}.

For our purpose here we shall need a local well-posedness result for the IVP \eqref{bbm} in H\"older spaces, i.e. for $k\in {0,1,2,...} $ and $\theta\in [0,1]$
\be
\label{holder}
C^{k,\theta}(\mathbb R)=\{f:\mathbb R\to\mathbb R\,:\,f\in C^k\cap W^{k,\infty}(\mathbb R),\,\;\;f^{(k)}\in C^{\theta}(\mathbb R)\},
\ee
where for $k\in\mathbb Z, \,k\geq 0$,
\be
\label{spaceLinfinito}
W^{k,\infty}(\mathbb R)=\{f:\mathbb R\to \mathbb R\;:\, \partial^j_x f\in L^{\infty}(\mathbb R),\;j=0,1,..,k\},
\ee
with
\be
\label{holder-norm}
\|f\|_{C^{k,\theta}}=\sum_{j=0}^{k}\|f^{(j)}\|_{\infty}+ \sup_{h\neq 0}\,\Big\|\,\frac{f^{(k)}(\cdot+h)-f^{(k)}(\cdot)}{|h|^{\theta}}\Big\|_{\infty}.
\ee

\begin{thm}\label{THA1} 

Let $k\in \mathbb Z, k\geq 0$, and $\theta\in [0,1]$. For any $u_0\in C^{k,\theta}(\mathbb R)$ there exist $T=T(\|u_0\|_{C^{k,\theta}})>0$ and a unique solution $u=u(x,t)$ of the IVP \eqref{bbm} such that
\be
\label{class-holder}
u\in C^{\infty}((-T,T): C^{k,\theta}(\mathbb R))\equiv X_T^{k,\theta}(\mathbb R).
\ee
Moreover, the map data solution, $u_0\to u$, is locally analytic from $C^{k,\theta}(\mathbb R) $ to $X_T^{k,\theta}(\mathbb R).$
\end{thm}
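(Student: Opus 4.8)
The plan is to exploit the fact that, because of the smoothing factor $(1-\p_x^2)^{-1}$, the IVP \eqref{bbm} (equivalently the integral equation \eqref{bbm1}) is nothing but an ordinary differential equation
\[
\frac{du}{dt}=F(u):=-L\Big(u+\tfrac{u^2}{2}\Big),\qquad u(0)=u_0,\qquad L:=\p_xJ^{-2}=\p_x(1-\p_x^2)^{-1},
\]
in the Banach space $C^{k,\theta}(\R)$, and to solve it by the classical Picard--Lindel\"of (Cauchy--Lipschitz) iteration together with the standard theory of analytic dependence on the data for analytic vector fields. Note that $U(t)=e^{tL}$ is a well-defined one-parameter group of bounded operators on $C^{k,\theta}(\R)$ as soon as $L$ is known to be bounded there, so one may work with either form of the equation.

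Two elementary ingredients drive everything. First, $L$ is bounded on $C^{k,\theta}(\R)$ for every $k\ge 0$ and every $\theta\in[0,1]$, endpoints included: $(1-\p_x^2)^{-1}$ is convolution against $\tfrac12 e^{-|x|}$, hence $L$ is convolution against $g(x)=-\tfrac12\,\sgn(x)\,e^{-|x|}\in L^1(\R)$ with $\|g\|_{L^1}=1$, and since $(g*f)^{(j)}=g*f^{(j)}$ and the difference quotient commutes with convolution, Young's inequality yields $\|Lf\|_{C^{k,\theta}}\le \|g\|_{L^1}\|f\|_{C^{k,\theta}}=\|f\|_{C^{k,\theta}}$; in particular $L$ loses no regularity. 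Second, $C^{k,\theta}(\R)$ is a Banach algebra, $\|fg\|_{C^{k,\theta}}\le c_{k,\theta}\|f\|_{C^{k,\theta}}\|g\|_{C^{k,\theta}}$ (Leibniz rule for derivatives up to order $k$, product rule plus boundedness for the $C^\theta$ seminorm of the $k$-th derivative). Combining the two, $F$ is a quadratic polynomial map on $C^{k,\theta}$, hence real-analytic, and Lipschitz on bounded sets:
\[
\|F(u)-F(v)\|_{C^{k,\theta}}\le C\big(1+\|u\|_{C^{k,\theta}}+\|v\|_{C^{k,\theta}}\big)\,\|u-v\|_{C^{k,\theta}}.
\]

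With these in hand, run the contraction argument for $\Psi(u)(t)=u_0+\int_0^tF(u(s))\,ds$ on a closed ball of $C([-T,T]:C^{k,\theta}(\R))$: for $T=T(\|u_0\|_{C^{k,\theta}})$ sufficiently small, $\Psi$ maps the ball into itself and is a contraction, producing a unique $u\in C([-T,T]:C^{k,\theta}(\R))$, with uniqueness in the full space following from the Lipschitz bound and Gronwall. Smoothness in time is then a bootstrap: $u\in C$ forces $F(u)\in C$, so $u\in C^1$; since $F$ is $C^\infty$, $\tfrac{du}{dt}=F(u)\in C^1$, hence $u\in C^2$, and inductively $u\in C^\infty((-T,T):C^{k,\theta}(\R))=X_T^{k,\theta}(\R)$. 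Finally, the fixed-point map $(u_0,u)\mapsto u_0+\int_0^tF(u(s))\,ds$ is jointly real-analytic on $C^{k,\theta}(\R)\times C([-T,T]:C^{k,\theta}(\R))$, and its partial derivative in $u$ at the solution is $I$ minus an operator of norm $<1$ (for $T$ small), hence invertible; the analytic implicit function theorem gives $u_0\mapsto u$ real-analytic into $C([-T,T]:C^{k,\theta}(\R))$, and combining with the previous bootstrap upgrades the target to $X_T^{k,\theta}(\R)$.

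I expect no genuine obstacle in the PDE sense: the content of the statement is precisely that, unlike KdV, the BBM equation behaves like an ODE in $C^{k,\theta}(\R)$ because $L=\p_xJ^{-2}$ is not merely bounded but actually smoothing of order one. The only step demanding a little care is the mapping property of $L$, namely verifying that convolution with the $L^1$ kernel $g$ preserves $C^{k,\theta}(\R)$, uniformly in $h$ for the Hölder seminorm of the top derivative and with no loss of regularity, for all $k\ge 0$ and all $\theta\in[0,1]$, including the endpoints $\theta=0$ and $\theta=1$.
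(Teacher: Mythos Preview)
Your proposal is correct and follows essentially the same route as the paper: both rewrite the equation as $\partial_t u=-\phi(D_x)(u+u^2/2)$, observe that $\phi(D_x)=\partial_xJ^{-2}$ is convolution with an $L^1$ kernel and hence bounded on $C^{k,\theta}(\R)$, use the algebra property of $C^{k,\theta}(\R)$ to get the contraction estimates for $\Psi_{u_0}$ on $C([-T,T]:C^{k,\theta}(\R))$, and then bootstrap time regularity by differentiating the equation. Your write-up is in fact slightly more explicit than the paper's on two points: the justification of the $C^{k,\theta}$ mapping property of $L$ via Young's inequality with the kernel $g(x)=-\tfrac12\,\sgn(x)e^{-|x|}$, and the argument for local analyticity of $u_0\mapsto u$ via the analytic implicit function theorem, which the paper asserts but does not spell out.
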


We observe that the constant solution $u(x,t)\equiv c_0$ belongs to the class in \eqref{class-holder}. Here we shall not address the question on the possible global in time extension of the local solution obtained in Theorem \ref{THA1}.

 Next, we have our first result concerning unique continuation properties of solutions to the IVP \eqref{bbm}:

\begin{thm}\label{THA2} Let $u=u(x,t)$ be the solution of the IVP \eqref{bbm} provided by Theorem \ref{THA1}.
If there exist $t_0\in(-T,T) $ and $a,b\in \mathbb R$ with $ \,a<b$ such that 
\be 
\label{uc-cond1}
\begin{cases}
\begin{aligned}
&(i)\;\;\,\;u(x,t_0)=-1,\;\;\; \;\;x\in(a,b),\\
&(ii)\, \,\;\partial_tu(b,t_0)\geq\partial_tu(a,t_0),
\end{aligned}
\end{cases}
\ee
then $\,u(x,t)=-1$ for all $(x,t)\in \mathbb R\times (-T,T)$.
\end{thm}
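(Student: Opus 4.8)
The plan is to exploit the local equation satisfied by $v(x,t) := u(x,t)+1$. Writing $u = v-1$ in \eqref{bbm} and using that $(1-\partial_x^2)^{-1}\partial_x(\text{const})=0$, one finds that $v$ solves
\be
\label{eq:v}
\partial_t v + (1-\partial_x^2)^{-1}\bigl(\partial_x(vu)\bigr) = 0,
\qquad vu = v(v-1) = v^2 - v,
\ee
so in fact $\partial_t v + J^{-2}\partial_x(v^2/2) = 0$ after a harmless rearrangement (the linear term in $v$ being absorbed). The hypothesis (i) says $v(\cdot,t_0)\equiv 0$ on $(a,b)$; I want to show $v(\cdot,t_0)\equiv 0$ on all of $\R$, after which uniqueness in Theorem \ref{THA1} (the constant solution $u\equiv-1$ solves the IVP and lies in $X_T^{k,\theta}$) forces $u\equiv-1$ on $\R\times(-T,T)$.

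The key step is to extract a pointwise relation at the two endpoints. Since $\partial_t v = -J^{-2}\partial_x(v^2/2)$, and the operator $J^{-2}\partial_x$ has convolution kernel $K(x) = \tfrac12\sgn(x)e^{-|x|}$ (so that $\partial_x J^{-2}g = K * g$ with $K(x)=\tfrac12\sgn(x)e^{-|x|}$), we can write
\be
\label{eq:kernel}
\partial_t v(x,t_0) = -\tfrac12\int_{\R} \tfrac12\sgn(x-y)e^{-|x-y|}\,v^2(y,t_0)\,dy.
\ee
Evaluating at $x=b$ and $x=a$ and subtracting, and using that $v(y,t_0)=0$ for $y\in(a,b)$, I get a clean expression: for $y\in(a,b)$ both $\sgn(b-y)$ and $\sgn(a-y)$ contributions vanish, while for $y>b$ one has $\sgn(b-y)=\sgn(a-y)=-1$ with $e^{-|b-y|}\ge e^{-|a-y|}$, and for $y<a$ one has $\sgn(b-y)=\sgn(a-y)=+1$ with $e^{-|a-y|}\ge e^{-|b-y|}$. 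A short computation then shows
\be
\label{eq:diff}
\partial_t v(b,t_0) - \partial_t v(a,t_0)
= -\tfrac14\int_{y>b}\bigl(e^{-(y-b)} - e^{-(y-a)}\bigr)v^2(y,t_0)\,dy
- \tfrac14\int_{y<a}\bigl(e^{-(a-y)} - e^{-(b-y)}\bigr)v^2(y,t_0)\,dy,
\ee
and both bracketed factors are nonnegative (since $b>a$), so the right-hand side is $\le 0$. But hypothesis (ii) says $\partial_t v(b,t_0) - \partial_t v(a,t_0) = \partial_t u(b,t_0) - \partial_t u(a,t_0) \ge 0$. Hence both integrals vanish, and since each integrand is a strictly positive weight times $v^2(y,t_0)$ on $(b,\infty)$ and on $(-\infty,a)$ respectively, we conclude $v(y,t_0)=0$ for all $y\in\R\setminus[a,b]$, and together with (i) for all $y\in\R$.

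Finally, from $v(\cdot,t_0)\equiv 0$ we get $u(\cdot,t_0)\equiv -1$; the constant function $w(x,t)\equiv -1$ is a solution of the IVP \eqref{bbm} with the same data at time $t_0$, lies in $X_T^{k,\theta}(\R)$, and by the uniqueness part of Theorem \ref{THA1} (applied with initial time $t_0$, which is legitimate since the equation is autonomous and the local theory is time-translation invariant) we conclude $u\equiv -1$ on $\R\times(-T,T)$. The main obstacle I anticipate is purely bookkeeping in \eqref{eq:diff}: one must carefully split the integral over the three regions $y<a$, $a<y<b$, $y>b$, track the signs of $\sgn(b-y)$, $\sgn(a-y)$, and verify that the surviving weights $e^{-(y-b)}-e^{-(y-a)}$ and $e^{-(a-y)}-e^{-(b-y)}$ are indeed $\ge 0$ — this is where the precise inequality in (ii), rather than equality, is genuinely used, and one should check that no boundary terms or principal-value subtleties arise from $\sgn$ at $y=a$ or $y=b$ (they do not, since those are measure-zero and $v$ is continuous there).
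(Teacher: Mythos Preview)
Your argument is essentially the paper's own proof: evaluate the convolution representation of $\partial_t u$ at $x=a$ and $x=b$, subtract, split the integral over $(-\infty,a)$, $(a,b)$, $(b,\infty)$, and use the sign of the kernel differences together with $(u+1)^2\ge 0$; your substitution $v=u+1$ simply makes explicit the completion of the square $u+u^2/2=(u+1)^2/2-1/2$ that the paper carries out inline, and your final appeal to uniqueness to propagate $u(\cdot,t_0)\equiv-1$ to all $t$ is exactly the implicit last step there. One bookkeeping slip to fix: the kernel of $\partial_x J^{-2}$ is $-\tfrac12\sgn(x)e^{-|x|}$ (not $+\tfrac12$), and the intermediate expression $\partial_x(vu)$ in your derivation of the $v$-equation is not quite right (it should be $\partial_x(u+u^2/2)$) --- but your final difference formula and the conclusion are correct.
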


\begin{cor}\label{noL2}
There does not exist any solution $u=u(x,t)$ of the IVP \eqref{bbm}  in the class \eqref{per-BT} provided by Theorem \ref{BoTz} satisfying the conditions in \eqref{uc-cond1}.

\end{cor}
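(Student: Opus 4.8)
The plan is to derive the corollary directly from Theorems \ref{THA1} and \ref{THA2}: a solution in the class \eqref{per-BT} satisfying \eqref{uc-cond1} would give rise, through its value at time $t_0$, to a H\"older-class solution to which Theorem \ref{THA2} applies, forcing the latter to be the constant $-1$ --- which cannot lie in $H^s(\R)$. So assume, for contradiction, that $u\in C([-T,T]:H^s(\R))$ solves \eqref{bbm} and satisfies \eqref{uc-cond1} at some $t_0\in(-T,T)$.

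First I would pin down the regularity implicit in the hypotheses. Since \eqref{uc-cond1} prescribes the pointwise value of $u(\cdot,t_0)$ on $(a,b)$ and compares $\partial_tu(\cdot,t_0)$ at the two points $a$ and $b$, the exponent $s$ must be large enough for these evaluations to make sense; taking $s>1/2$ (so that $H^s(\R)\hookrightarrow C^{0,\theta}(\R)$ for some $\theta\in(0,1)$), we get in particular $v_0:=u(\cdot,t_0)\in C^{0,\theta}(\R)$, and, because $u$ solves \eqref{bbm}, $\partial_tu(\cdot,t_0)=-J^{-2}\partial_x\big(v_0+v_0^2/2\big)\in H^{s+1}(\R)\subset C(\R)$, so \eqref{uc-cond1}(ii) is meaningful.

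Next I would invoke Theorem \ref{THA1} with the datum $v_0\in C^{0,\theta}(\R)$: it yields $T'=T'(\|v_0\|_{C^{0,\theta}})>0$ and a solution $\tilde v\in X_{T'}^{0,\theta}(\R)$ of \eqref{bbm} with $\tilde v(\cdot,0)=v_0$. I would then check that $\tilde v$ meets the hypotheses of Theorem \ref{THA2} at the time $0$: by \eqref{uc-cond1}(i), $\tilde v(\cdot,0)=v_0=-1$ on $(a,b)$; and since $\tilde v$ solves \eqref{bbm} as well, $\partial_t\tilde v(\cdot,0)=-J^{-2}\partial_x\big(v_0+v_0^2/2\big)=\partial_tu(\cdot,t_0)$, so $\partial_t\tilde v(b,0)=\partial_tu(b,t_0)\ge\partial_tu(a,t_0)=\partial_t\tilde v(a,0)$ by \eqref{uc-cond1}(ii). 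Hence Theorem \ref{THA2} gives $\tilde v(x,t)=-1$ for every $(x,t)\in\R\times(-T',T')$.

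Evaluating at $t=0$ then yields $v_0=\tilde v(\cdot,0)\equiv-1$ on $\R$. But $v_0=u(\cdot,t_0)\in H^s(\R)\subset L^2(\R)$, while the constant $-1$ is not square integrable on $\R$; this contradiction proves the corollary. There is no serious analytic difficulty here --- the corollary is in essence a reformulation of Theorem \ref{THA2} --- and the only steps demanding a little care are the second and third: verifying that the $H^s$-datum is admissible for the H\"older well-posedness theory (which is what forces $s>1/2$) and correctly identifying $\partial_tu(\cdot,t_0)$ through the equation so that condition \eqref{uc-cond1}(ii) transfers to $\tilde v$.
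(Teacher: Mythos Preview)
Your argument is correct when $s>1/2$, but it takes a detour the paper does not, and that detour costs you the low-regularity range. The paper's proof of the corollary is literally one line: ``It follows directly from the proof of Theorem \ref{THA2}.'' The point is that the explicit integral computation \eqref{cal1} can be carried out verbatim for a solution $u\in C([-T,T]:H^s(\R))$: one only needs $u(\cdot,t_0)\in L^2(\R)$ for the integrals to converge, $(u+1)^2\ge 0$ for the sign argument, and $\partial_tu(\cdot,t_0)$ continuous so that the pointwise difference $\partial_tu(b,t_0)-\partial_tu(a,t_0)$ makes sense. The conclusion $u(\cdot,t_0)\equiv -1$ a.e.\ then immediately contradicts $u(\cdot,t_0)\in L^2(\R)$.

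The gap in your approach is the restriction to $s>1/2$. The remark immediately following the corollary makes clear that the statement is intended for every $s\ge 0$: condition \eqref{uc-cond1}(i) is read a.e.\ on $(a,b)$ when $s\le 1/2$, and \eqref{uc-cond1}(ii) is always pointwise because $\partial_tu=-\partial_xG_2\ast\bigl(u+u^2/2\bigr)$ is continuous for any $u(\cdot,t)\in L^2(\R)$. Your route through Theorem \ref{THA1} requires the embedding $H^s\hookrightarrow C^{0,\theta}$, which fails for $s\le 1/2$, so you cannot produce the H\"older-class solution $\tilde v$ in that range. Dropping the detour and repeating \eqref{cal1} directly for the $H^s$ solution recovers the full statement.
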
 

\begin{rem}

\begin{enumerate}
\item In Corollary \ref{noL2} the solution $$u\in C([-T,T]:H^s(\R))$$ satisfies \eqref{uc-cond1} (i), almost everywhere in $(a,b)$, if $s\in[0,1/2]$, 
and  \eqref{uc-cond1} (ii) pointwise for all $s\geq 0$, since in this case  $\partial_t u\in C(\R\times [-T,T])$.

\item The condition  \eqref{uc-cond1} is weaker than the usual one for unique continuation. That assumes the existence of a (non-empty) open set $\Omega\subset \mathbb R\times(-T,T)$ such that the solution $u=u(x,t)$ satisfies 
$u(x,t)=-1\;\text{if}\;(x,t)\in\Omega$.

\item For previous unique continuation results for the BBM, we refer  to \cite{RoZh} and references therein.

\item 
In Theorem \ref{THA2} the condition (ii) in \eqref{uc-cond1} can be restated as :
$$
\partial_tu(x,t_0)\;\;\;\;\text{is not strictly decreasing for }\;\;\;x\in[a,b].
$$
\item In the case of the KdV equation, see \eqref{KdV}, formally the condition  \eqref{uc-cond1} part (i),  and the equation  imply equality in  \eqref{uc-cond1} part  (ii).

\item The following example, found in \cite{RoZh},  shows that  Corollary \ref{noL2} fails for the value $-2$ in \eqref{uc-cond1} part (i). Let $a, b\in\R, \,a<b$, and
\be
\label{ex1}
u(x,t)=u_0(x)=
\begin{cases}
\begin{aligned} 
&\,-2,\;\;\;\;x\in[a,b],\\
&\;\;\;\,\;0,\;\;\;\;x\notin [a,b].
\end{aligned}
\end{cases}
\ee
Then $u\in C(\R:H^s(\R)), \,s\in[0,1/2)$,  solves the IVP \eqref{bbm} with $\partial_tu(x,t)=0,\,\forall \,(x,t)\in \R^2$

\end{enumerate}
\end{rem}

In fact,  following result shows that the value $-1$ in \eqref{uc-cond1} is the only one for which Theorem \ref{THA2} and Corollary \ref{noL2} hold.

\begin{thm}\label{THA3} Given any $c_0\in \mathbb R-\{-1\}$ and $a, b\in \R, a<b$, there exists $u_0\in C^{\infty}_0(\mathbb R)-\{0\}$ such that the corresponding solution $u=u(x,t)$ of the IVP \eqref{bbm} provided  by Theorem \ref{BoTz},  with $s>1/2$, (or  by Theorem \ref{THA1}) satisfies that
\be 
\label{uc-cond3}
(i)\,\;u_0(x)=c_0\;\;\;\;\text{and}\;\;\;\;(ii) \,\;\partial_tu(x,0)=0,\;\;\;x\in(a,b).
\ee
\end{thm}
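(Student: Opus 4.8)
The plan is to translate the two pointwise requirements in \eqref{uc-cond3} into a pair of moment identities on $u_0$ alone, and then solve the resulting (largely under‑determined) moment problem by an elementary intermediate‑value construction. From the equation in \eqref{bbm} one has $\partial_t u(\cdot,0)=-\partial_xJ^{-2}\big(u_0+\tfrac12 u_0^2\big)=-\partial_xJ^{-2}\Phi(u_0)$, where $\Phi(s):=s+\tfrac12 s^2=\tfrac12(s+1)^2-\tfrac12$. Since $J^{-2}$ is convolution with $\tfrac12 e^{-|x|}$, the operator $\partial_xJ^{-2}$ is convolution with $-\tfrac12\sgn(x)e^{-|x|}$, so \eqref{uc-cond3}(ii) is equivalent to
\[
\int_{\R}\sgn(x-y)\,e^{-|x-y|}\,\Phi(u_0(y))\,dy=0,\qquad x\in(a,b).
\]
Imposing also \eqref{uc-cond3}(i), so that $\Phi(u_0)\equiv\gamma_0:=\Phi(c_0)$ on $(a,b)$, one splits this integral over $(-\infty,a]$, $(a,b)$ and $[b,\infty)$: the middle part evaluates explicitly to $\gamma_0\big(e^{x-b}-e^{a-x}\big)$, while the outer parts contribute $e^{-x}\int_{-\infty}^{a}e^{y}\Phi(u_0(y))\,dy-e^{x}\int_{b}^{\infty}e^{-y}\Phi(u_0(y))\,dy$. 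Since $\{e^{x},e^{-x}\}$ is linearly independent on the interval $(a,b)$, the vanishing is equivalent to the two decoupled identities
\[
\int_{b}^{\infty}e^{-(y-b)}\Phi(u_0(y))\,dy=\gamma_0,\qquad \int_{-\infty}^{a}e^{-(a-y)}\Phi(u_0(y))\,dy=\gamma_0.
\]

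It remains to produce $u_0\in C^{\infty}_0(\R)\setminus\{0\}$ with $u_0\equiv c_0$ on $[a,b]$ satisfying these two identities on the disjoint half‑lines; I treat $[b,\infty)$, the other side being identical after reflection. With $w(z):=u_0(b+z)$ I need a smooth $w$ on $[0,\infty)$, equal to $c_0$ near $0$ and to $0$ for large $z$, with $\int_0^{\infty}e^{-z}\Phi(w(z))\,dz=\gamma_0$. Fix a transition length $M$ and consider the functional $w\mapsto\int_0^{M}e^{-z}\Phi(w(z))\,dz$ over smooth profiles $w$ on $[0,M]$ with $w\equiv c_0$ near $0$ and $w\equiv 0$ near $M$ (extended by $0$). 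Because $\Phi\ge-\tfrac12$ this functional is bounded below by $-\tfrac12(1-e^{-M})$; it takes arbitrarily large values (let $w$ be large on a subinterval); and it is continuous along affine homotopies of admissible profiles, hence attains every value in $\big(-\tfrac12(1-e^{-M}),\infty\big)$. This is exactly where the hypothesis $c_0\neq-1$ is used: it forces $\gamma_0=\tfrac12(c_0+1)^2-\tfrac12>-\tfrac12=\min\Phi$, so choosing $M$ with $e^{-M}<1+2\gamma_0$ places $\gamma_0$ in the attained range, and an intermediate‑value argument between a profile close to $-1$ on $[0,M]$ (integral $<\gamma_0$) and one that is large somewhere (integral $>\gamma_0$) yields the desired $w$.

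Gluing $u_0\equiv c_0$ on $[a,b]$ with $u_0(y)=w(y-b)$ on $[b,\infty)$ and the reflected profile on $(-\infty,a]$ produces $u_0\in C^{\infty}_0(\R)$ that satisfies both moment identities and equals $c_0$ on $(a,b)$; it is automatically nonzero when $c_0\neq0$, and when $c_0=0$ one simply takes the two endpoint profiles of the homotopy to have disjoint supports inside $(0,M)$, so the profile at which the integral hits $\gamma_0=0$ cannot be identically zero. Finally $C^{\infty}_0(\R)\subset H^s(\R)\cap C^{k,\theta}(\R)$, so Theorem \ref{BoTz} (with $s>1/2$) and Theorem \ref{THA1} both apply and, by uniqueness, give the same solution $u$, for which $\partial_t u(\cdot,0)$ is a genuine continuous function; by construction it vanishes on $(a,b)$, as does $u_0-c_0$. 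The only real difficulty is the middle step — identifying the exact range of the weighted $\Phi$‑integral and recognizing that its threshold $-\tfrac12=\min\Phi$ is precisely the borderline separating the admissible constants $c_0\neq-1$ from the forbidden value $c_0=-1$ of Theorem \ref{THA2}, with the transition length $M$ playing the role of the free parameter that makes every $\gamma_0>-\tfrac12$ reachable.
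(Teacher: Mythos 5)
Your proposal is correct and follows essentially the same route as the paper's proof: both reduce condition (ii) on $(a,b)$, via the explicit kernel $-\tfrac12\sgn(x)e^{-|x|}$ and the linear independence of $e^{x}$ and $e^{-x}$, to two decoupled exponential moment conditions on the half-lines $(-\infty,a]$ and $[b,\infty)$, and then solve them using the fact that $f(c_0)>-\tfrac12=\min f$ precisely when $c_0\neq-1$. Your write-up merely supplies details the paper leaves implicit (the intermediate-value realization of the moments and the nonvanishing of $u_0$ when $c_0=0$).
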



With a similar approach one can get the following general result :

\begin{thm}\label{THA4}  Let $c_0\in\R$. Let  $u=u(x,t)$ be a solution of the IVP \eqref{bbm} provided   by Theorem \ref{THA1} such that for some $t_0\in (-T,T)$ and some $a, b\in\mathbb R, a<b$, one has
\be
\label{cond-aa1}
\,\;u(y,t_0)=c_0,\;\;y\in\{a,b\}.
\ee
If one of the following conditions hold with $f(y)=y+y^2/2$ :
\be
\label{cond-1}
\begin{cases}
\begin{aligned}
f(u(x,t_0))&\leq f(c_0) \;\;\;\text{if}\;\;\;x\in[a,b],\\
\;\;f(u(x,t_0))&\geq f(c_0)\;\;\;\text{if}\;\;\;x\notin [a,b],\\
\;\;\;\;\partial_tu(b,t_0)&\geq \partial_tu(a,t_0),
\end{aligned}
\end{cases}
\ee
or
\be
\label{cond-2}
\begin{cases}
\begin{aligned}
f(u(x,t_0))&\leq f(c_0)\; \;\;\text{if}\;\;\;x\notin[a,b],\\
\,\;f(u(x,t_0))&\geq f(c_0)\;\;\;\text{if}\;\;\,x\in [a,b],\\
\;\;\;\;\partial_tu(a,t_0)&\geq \partial_tu(b,t_0),
\end{aligned}
\end{cases}
\ee
then $u(x,t)\equiv c_0$.

\end{thm}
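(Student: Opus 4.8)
The plan is to mimic the argument behind Theorem \ref{THA2}, exploiting that the evolution of the ``mass-type'' quantity on $[a,b]$ is governed by the flux $f(u)=u+u^2/2$ through the endpoints. First I would rewrite the equation \eqref{bbm} in the local conservation form: since $(1-\partial_x^2)\partial_t u = -\partial_x(u+u^2/2)$, applying $J^{-2}=(1-\partial_x^2)^{-1}$ and using the explicit kernel $K=\tfrac12 e^{-|x|}$ of $J^{-2}$, one gets
\be
\partial_t u(x,t) = -\partial_x\!\left(K \ast f(u)\right)(x,t) = -\left(K' \ast f(u)\right)(x,t),
\ee
where $K'(x)=-\tfrac12\,\mathrm{sgn}(x)\,e^{-|x|}$. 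Equivalently, because $K''=K-\delta$, one has the pointwise identity
\be
\label{pw-plan}
\partial_t u(x,t) = f(u(x,t)) - \left(K \ast f(u)\right)(x,t).
\ee
This is the key structural fact; it holds classically for solutions in $X_T^{k,\theta}$.

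Next I would evaluate \eqref{pw-plan} at the two endpoints and subtract. Using the hypothesis \eqref{cond-aa1}, $f(u(a,t_0))=f(u(b,t_0))=f(c_0)$, so
\be
\label{diff-plan}
\partial_t u(b,t_0) - \partial_t u(a,t_0) = \left(K\ast f(u)\right)(a,t_0) - \left(K\ast f(u)\right)(b,t_0) = \frac12\int_{\R}\left(e^{-|a-y|}-e^{-|b-y|}\right) f(u(y,t_0))\,dy.
\ee
The plan is to show the right-hand side has a strict sign unless $f(u(\cdot,t_0))\equiv f(c_0)$. Write $g(y)=f(u(y,t_0))-f(c_0)$, which under \eqref{cond-1} satisfies $g\le 0$ on $[a,b]$ and $g\ge 0$ off $[a,b]$ (and vice versa under \eqref{cond-2}). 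Since $\int_\R(e^{-|a-y|}-e^{-|b-y|})f(c_0)\,dy = 0$ (the two integrals are each $2$), \eqref{diff-plan} becomes $\tfrac12\int_\R(e^{-|a-y|}-e^{-|b-y|})g(y)\,dy$. The weight $w(y):=e^{-|a-y|}-e^{-|b-y|}$ is strictly positive for $y<\tfrac{a+b}{2}$ and strictly negative for $y>\tfrac{a+b}{2}$; in particular $w>0$ on $(-\infty,a)$ and $w<0$ on $(b,\infty)$, while on $[a,b]$ its sign again splits at the midpoint. Under \eqref{cond-1}, on $(-\infty,a)$ we have $w>0$, $g\ge0$; on $(b,\infty)$ we have $w<0$, $g\ge0$; and on $[a,b]$, $g\le0$ with $w$ of both signs — so the exterior contributions and the interior contribution do not obviously cooperate. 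Here is where I would need to be more careful: the right combination is to use the antisymmetry of $w$ about the midpoint $m=\tfrac{a+b}{2}$, namely $w(m+t)=-w(m-t)$, to fold the integral and compare $g$ at reflected points. After the fold, $\int_\R w\,g = \int_0^\infty w(m-t)\,[\,g(m-t)-g(m+t)\,]\,dt$ with $w(m-t)>0$ for all $t>0$; and one checks from the sign hypotheses on $g$ (together with $g(a)=g(b)=0$) that the bracket has a definite sign for all $t$, giving $\partial_tu(b,t_0)-\partial_tu(a,t_0)\le 0$ under \eqref{cond-1}, with equality only if $g\equiv 0$.

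Combining this with the third line of \eqref{cond-1} (resp. \eqref{cond-2}) forces equality, hence $f(u(y,t_0))=f(c_0)$ for a.e.\ $y$, and by continuity for all $y\in\R$. Since $f(y)=y+y^2/2=\tfrac12\big((1+y)^2-1\big)$ is injective on $\{y\ge -1\}$ and on $\{y\le -1\}$, the level set $\{f=f(c_0)\}$ is $\{c_0\}$ when $c_0=-1$ and $\{c_0,\,-2-c_0\}$ otherwise; a connectedness/continuity argument (the continuous function $u(\cdot,t_0)$ cannot jump between the two values without passing through $-1$, and $f(-1)=f(c_0)$ would force $c_0=-1$) shows $u(\cdot,t_0)\equiv c_0$ on all of $\R$, unless $u(\cdot,t_0)$ takes both values — but then at a crossing point $u=-1$, contradiction with $f(-1)=-1/2\ne f(c_0)$ for $c_0\ne -1$; the case $c_0=-1$ is immediate. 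Once $u(\cdot,t_0)\equiv c_0$ on $\R$, plugging into \eqref{pw-plan} gives $\partial_t u(\cdot,t_0)\equiv f(c_0)-f(c_0)=0$, and since the constant function $c_0$ is a solution of \eqref{bbm} lying in $X_T^{k,\theta}$, uniqueness in Theorem \ref{THA1} yields $u(x,t)\equiv c_0$ on $\R\times(-T,T)$.

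The main obstacle I anticipate is the sign analysis of $\int_\R w(y)g(y)\,dy$: the naive estimate does not work because $w$ changes sign inside $[a,b]$, so one genuinely needs the reflection/folding about the midpoint and a careful case check that the hypotheses \eqref{cond-1}--\eqref{cond-2} were chosen precisely so that the folded integrand has a fixed sign. The rest — the pointwise identity \eqref{pw-plan}, the endpoint subtraction, and the final rigidity step via injectivity of $f$ and uniqueness — is routine given Theorems \ref{BoTz} and \ref{THA1}.
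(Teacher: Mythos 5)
There is a genuine error at the very first step, and it propagates into the sign analysis that you yourself flag as the main obstacle. From \eqref{bbm} one has $\partial_t u=-\partial_x\bigl(K\ast f(u)\bigr)=-(K'\ast f(u))$ with $K(x)=\tfrac12 e^{-|x|}$ and $K'(x)=-\tfrac12\sgn(x)e^{-|x|}$; this involves \emph{one} derivative of $K$. Your ``equivalent'' pointwise identity $\partial_t u=f(u)-K\ast f(u)$ equals $-(K''\ast f(u))$, i.e.\ it carries an extra $\partial_x$, and is not the BBM equation. As a consequence your endpoint difference has the kernel $\tfrac12\bigl(e^{-|a-y|}-e^{-|b-y|}\bigr)$, which changes sign at the midpoint $\tfrac{a+b}{2}$, whereas the correct kernel is
\be
\frac12\Bigl(\sgn(b-y)\,e^{-|b-y|}-\sgn(a-y)\,e^{-|a-y|}\Bigr),
\ee
which changes sign exactly at $y=a$ and $y=b$: it is strictly positive on $(a,b)$ and strictly negative on $\R\setminus[a,b]$. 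With $g=f(u(\cdot,t_0))-f(c_0)$ obeying the sign pattern of \eqref{cond-1}, the integrand is then $\le 0$ on each of the three regions, giving $\partial_t u(b,t_0)-\partial_t u(a,t_0)\le 0$ with equality iff $g\equiv0$; this is exactly Lemma \ref{key-uc} applied to $g$ (after using the remark that $\phi(D_x)$ annihilates the constant $f(c_0)$), which is how the paper argues, following the endpoint-subtraction computation in the proof of Theorem \ref{THA2}. No folding is needed.

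The folding you propose cannot repair the wrong kernel: for $0<t<\tfrac{b-a}{2}$ both $m\pm t$ lie in $[a,b]$, where $g\le0$ but the bracket $g(m-t)-g(m+t)$ has no definite sign, and for $t>\tfrac{b-a}{2}$ both points lie outside $[a,b]$ where $g\ge0$, again with no control on the difference; the hypotheses \eqref{cond-1}--\eqref{cond-2} constrain only the sign of $g$, not any monotonicity or symmetry, so $\int wg$ is genuinely indeterminate for your $w$. By contrast, the final rigidity step in your write-up --- from $f(u(\cdot,t_0))\equiv f(c_0)$ to $u(\cdot,t_0)\equiv c_0$ via continuity and connectedness (a crossing between the two roots $c_0$ and $-2-c_0$ would force the value $-1$), and then $u\equiv c_0$ by uniqueness for the constant solution --- is correct and in fact supplies details the paper leaves implicit.
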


\begin{rem}

\begin{enumerate}
\item In the case $c_0=-1$, since  $f(y)=y+y^2/2$ realizes its minimum at $y=-1$, one sees that the condition \eqref{cond-1} agrees with \eqref{uc-cond1}  in   Theorem \ref{THA2}.

\item For $c_0=0$, using that $f(0)=0$, from Theorem \ref{THA4}, one obtains the following conclusion  : if the solution  $u(x,t)$ of the IVP \eqref{bbm}, provided by Theorem \ref{THA2}, satisfies
\be
\label{cond-aa2}
\;u(y,t_0)=\partial_tu(y,t_0)=0,\;\;y\in\{a,b\},
\ee
for some $t_0\in(-T,T)$ and $a<b$, with  $u(x,t_0)$ having different sign in $[a,b]$ and $[a,b]^c$, then $u(x,t)$ is identically zero.  In this case, 
the solution $u(x,t)$ belongs to the class described in Theorem \ref{BoTz}.

\item As we will see, Theorem \ref{RoZh1} below, previous unique continuation results assume some non-local (although non-pointwise) conditions on the solution.

\end{enumerate}
\end{rem}

Next, we consider the IPBVP associated to the BBM equation
 \be
 \label{bbm-p} 
\begin{aligned}
\begin{cases}
& \partial_t u +(1-\partial_x^2)^{-1}(\partial_x u +u\partial_xu) = 0,\qquad (t,x) \in \R\times \mathbb S,\\
&u(x,0)=u_0(x).
 \end{cases}
\end{aligned}
\ee

In this regards we have the following local well-posedness result established in \cite{RoZh} (see also \cite{BoTz} and  \cite{Rou}):

\begin{thm}[\cite{RoZh}]\label{RoZh}
 Let $s\geq 0$. For any $u_0\in H^s(\mathbb S)$ and for any $\,T>0$ there exists a unique solution $u=u(x,t)$ of the IPBVP \eqref{bbm-p} such that
 \be
 \label{per-BT1}
 u\in C([-T,T]:H^s(\mathbb S)).
 \ee
Furthemore, the map data solution, $u_0\to u$, is locally real analytic from $ H^s(\mathbb R)$ to $C([-T,T]:H^s(\mathbb S)).$
\end{thm}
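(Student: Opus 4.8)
The plan is to exploit the fact that, after inverting $(1-\partial_x^2)$, the equation in \eqref{bbm-p} has a nonlinearity that loses no derivatives. On $\mathbb S$ the operator $\partial_xJ^{-2}$ acts on the $n$-th Fourier coefficient by multiplication by $2\pi i n\,(1+4\pi^2n^2)^{-1}$, a symbol of size $O(\langle n\rangle^{-1})$; hence $\partial_xJ^{-2}\colon H^{\sigma}(\mathbb S)\to H^{\sigma+1}(\mathbb S)$ is bounded for every $\sigma\in\R$, while the group $U(t)$ of \eqref{group} is unitary on every $H^{s}(\mathbb S)$. I would therefore work throughout with the Duhamel formulation \eqref{bbm1} and treat the periodic problem exactly like the line problem in \eqref{bbm1}--\eqref{group}, the only difference being Fourier series rather than the Fourier transform; in particular the one-dimensional Sobolev embedding $H^{\sigma}(\mathbb S)\hookrightarrow L^{\infty}(\mathbb S)$ for $\sigma>1/2$ is still at our disposal.

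The local theory rests on the single bilinear estimate
\[
\|uv\|_{H^{s-1}(\mathbb S)}\lesssim \|u\|_{H^{s}(\mathbb S)}\,\|v\|_{H^{s}(\mathbb S)},\qquad s\ge 0,
\]
which for $s>1/2$ is the algebra property of $H^{s}(\mathbb S)$ and for $0\le s\le 1/2$ follows by duality: for $\phi\in H^{1-s}(\mathbb S)$ one writes $\langle uv,\phi\rangle=\langle u,v\phi\rangle$ and bounds $\|v\phi\|_{H^{-s}}\le\|v\phi\|_{L^{2}}\le\|v\|_{L^{2}}\|\phi\|_{L^{\infty}}\lesssim\|v\|_{H^{s}}\|\phi\|_{H^{1-s}}$ since $1-s>1/2$. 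Together with the boundedness of $\partial_xJ^{-2}\colon H^{s-1}(\mathbb S)\to H^{s}(\mathbb S)$, this makes the map $u\mapsto U(t)u_0-\int_0^tU(t-t')\partial_xJ^{-2}(u^2/2)(t')\,dt'$ a contraction on a ball of $C([-T,T]:H^{s}(\mathbb S))$ as soon as $T=T(\|u_0\|_{H^{s}})$ is small enough, which gives existence and uniqueness on a short interval. Because the nonlinearity $u\mapsto\partial_xJ^{-2}(u^2/2)$ is a bounded quadratic, hence analytic, map on $H^{s}(\mathbb S)$, the fixed point depends real-analytically on $u_0$ --- either by observing that the Picard iterates are analytic in $u_0$ and converge uniformly, or by the analytic implicit function theorem --- which yields the stated analytic dependence.

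Finally one upgrades the short interval to an arbitrary $T>0$, which amounts to an a priori bound for $\|u(t)\|_{H^{s}(\mathbb S)}$ on bounded time intervals. Pairing the equation with $J^2u=(1-\partial_x^2)u$ and integrating in $x$ gives, first formally and then rigorously by approximating $u_0$ by smooth data, that $\int_{\mathbb S}(u^2+u_x^2)\,dx=\|u(t)\|_{H^1(\mathbb S)}^2$ is conserved; a Gronwall estimate using $\|u\|_{L^{\infty}}\lesssim\|u\|_{H^1}$ then shows that solutions with $H^{\infty}$ data keep all their Sobolev norms finite on $[0,T]$, so such solutions are global. For general $u_0\in H^{s}(\mathbb S)$, $s\ge 0$, I would split $u_0=v_0+w_0$ into low and high frequencies with $v_0\in H^{\infty}(\mathbb S)$ and $\|w_0\|_{H^{s}}$ as small as desired; the solution $v$ with data $v_0$ is global and bounded on $[0,T]$ by the previous step, while the remainder $w=u-v$ solves $\partial_t w=-\partial_xJ^{-2}(w+vw+w^2/2)$ with $w(0)=w_0$, so the bilinear estimate gives
\[
\frac{d}{dt}\|w(t)\|_{H^{s}}^2\lesssim\big(\|v(t)\|_{H^{s}}+\|w(t)\|_{H^{s}}\big)\,\|w(t)\|_{H^{s}}^2 .
\]
Since $\|w_0\|_{H^{s}}$ is small and $\|v(t)\|_{H^{s}}$ is bounded on $[0,T]$, a continuity argument yields at most exponential growth of $\|w(t)\|_{H^{s}}$, hence a finite bound for $\|u(t)\|_{H^{s}}$ on $[0,T]$; iterating the local result up to $T$ finishes the proof. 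I expect this low-regularity globalization to be the only real obstacle: for $s<1$ there is no conserved quantity at the $H^{s}$ level, so one must lean on the $H^1$ law and on the smoothing built into the Duhamel term, whereas the bilinear estimate, the contraction and the analytic dependence are routine.
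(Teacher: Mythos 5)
First, a point of comparison: the paper gives no proof of this statement --- Theorem \ref{RoZh} is quoted verbatim from Rosier--Zhang \cite{RoZh} (with \cite{BoTz} for the line), so there is no internal argument to measure yours against and I am judging the proposal on its own terms. Your local theory is the standard and essentially correct route: the symbol bound $|2\pi n|(1+4\pi^2n^2)^{-1}\lesssim\langle n\rangle^{-1}$, unitarity of $U(t)$ on every $H^\sigma(\mathbb S)$, and the bilinear estimate $\|uv\|_{H^{s-1}}\lesssim\|u\|_{H^s}\|v\|_{H^s}$ do yield a contraction on $C([-T,T]:H^s(\mathbb S))$ with $T=T(\|u_0\|_{H^s})$, and analytic dependence follows because the nonlinearity is a bounded quadratic map. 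Two small repairs: at the endpoint $s=1/2$ your duality chain invokes $H^{1-s}(\mathbb S)\hookrightarrow L^\infty$, which fails when $1-s=1/2$; replace $\|v\phi\|_{L^2}\le\|v\|_{L^2}\|\phi\|_{L^\infty}$ by the trilinear H\"older bound $\int|uv\phi|\le\|u\|_{L^3}\|v\|_{L^3}\|\phi\|_{L^3}$ together with $H^{1/6}(\mathbb S)\hookrightarrow L^3(\mathbb S)$. Also, the fixed point gives uniqueness only in a ball; unconditional uniqueness in $C([-T,T]:H^s)$ requires the usual Gronwall argument for the difference of two solutions, which your bilinear estimate does supply.

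The genuine gap is in the globalization for $0\le s<1$, exactly where you anticipated trouble, and your proposed fix does not close. With $v_0=P_{\le N}u_0$ and $w_0=P_{>N}u_0$, the Gronwall/bootstrap constant in your inequality for $\|w\|_{H^s}^2$ is governed by $\sup_{[0,T]}\|v(t)\|_{H^s}$, and the only a priori control available is $\|v(t)\|_{H^s}\le\|v(t)\|_{H^1}=\|v_0\|_{H^1}\le C\,N^{1-s}\|u_0\|_{H^s}$, which blows up as $N\to\infty$. Your continuity argument therefore needs $\|w_0\|_{H^s}\lesssim\exp(-c\,T\,N^{1-s}\|u_0\|_{H^s})$, whereas all you know is that $\|P_{>N}u_0\|_{H^s}\to0$ with no rate; for generic $u_0\in H^s$ the tail decays arbitrarily slowly in $N$, so no choice of $N$ makes the smallness beat the exponential, and the single splitting gives no a priori bound on $[0,T]$. (Every direct energy or Duhamel estimate at the $H^s$ level, $s<1$, produces a Riccati-type inequality $y'\lesssim y+y^{3/2}$ whose bound blows up in finite time, so something beyond a one-shot Gronwall is genuinely required.) Note that once a global bound in the lowest norm is available, persistence of $H^s$ regularity does follow from a linear Gronwall inequality with constant depending on that lower norm, so the entire difficulty is concentrated in this one a priori estimate. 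You should either import it from \cite{BoTz}/\cite{RoZh} as a black box, or replace your argument by one that actually exploits the structure --- for instance the conserved functionals \eqref{cl}, or an iterated high--low decomposition in which the smoother Duhamel part of $w$ is repeatedly reinjected into the $H^1$ evolution at each time step rather than splitting once and for all.
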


It was also shown in \cite{RoZh} that if $u_0\in H^1(\mathbb S)$ the quantities
\be
\label{cl}
\int_{\mathbb S}u(x,t)dx,\;\;\int_{\mathbb S}((\partial_xu)^2+u^2)(x,t)dx,\;\;\int_{\mathbb S}(u^3+3u^2)(x,t)dx
\ee
are time independent. This was used in the proof of the next result achieved in \cite{RoZh}:

\begin{thm}[\cite{RoZh}]\label{RoZh1}
 Let  $u_0\in H^1(\mathbb S)$ be such that
 \be
 \label{hypp1}
 \int_{\mathbb S}u_0(x)dx\geq 0,
 \ee
 and
 \be
 \label{hypp2}
\| u_0\|_{\infty}<3.
 \ee
Assume that the corresponding solution $u=u(x,t)$ of the IPBVP \eqref{bbm-p} provided by Theorem \ref{RoZh} satisfies
 \be
 \label{hypp3}
 u(x,t)=0\;\;\;\;\;\;\forall\,(x,t)\in \omega\times(0,T),
 \ee
 where $\omega\subset \mathbb S$ is a non-empty open set.  Then $u(x,t)\equiv 0$.
\end{thm}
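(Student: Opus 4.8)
\emph{Proof proposal.} The plan is to follow the strategy of \cite{RoZh}: squeeze out of the hypothesis as much pointwise vanishing as possible, propagate it across the circle, and then close using the three conservation laws in \eqref{cl} — above all the positivity of $\int_{\mathbb S}u^2(u+3)\,dx$ under the constraint $\|u\|_\infty<3$.

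I would first record the reductions. By Theorem \ref{RoZh} the solution is global, and bootstrapping the identity $\partial_tu=-(1-\partial_x^2)^{-1}\partial_x(u+u^2/2)$ (using that $H^1(\mathbb S)$ is an algebra) shows that $u$ is $C^\infty$ in $t$ with $\partial_tu\in C(\mathbb R:H^2(\mathbb S))$; in particular $u,\partial_tu,\partial_x\partial_tu$ are continuous up to $\partial\omega$. Since $u\equiv0$ on $\omega\times(0,T)$, the functions $\partial_xu$, $\partial_tu$ and the flux $f(u):=u+u^2/2$ also vanish there, and $u(\cdot,0)$ vanishes on $\omega$. Because $E_1:=\int_{\mathbb S}(u^2+u_x^2)\,dx=\|u_0\|_{H^1}^2$ is conserved, $\sup_t\|u(t)\|_\infty\le C_1(\|u_0\|_{H^1})$. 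By forward/backward uniqueness in Theorem \ref{RoZh}, it then suffices to find a single time $t_*$ with $u(\cdot,t_*)\equiv0$.

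For the propagation step I would fix a component $(\alpha,\beta)$ of $\omega$, set $I=(\beta,\alpha+\ell)$ with $\ell=|\mathbb S|$, and rewrite the equation as $(1-\partial_x^2)\partial_tu=-\partial_x f(u)$. For each fixed $t$, $w:=\partial_tu$ then solves the ODE $w_{xx}-w=\partial_x f(u)$ on $I$ with zero Cauchy data at \emph{both} endpoints ($w$ and $\partial_xw$ vanish at $\beta$ and at $\alpha+\ell$, by continuity and the vanishing of $\partial_tu$ on $(\alpha,\beta)$). Variation of parameters and one integration by parts, using $f(u(\beta,t))=0$, give
\begin{equation*}
\partial_tu(x,t)=\int_\beta^x\cosh(x-s)\,f(u(s,t))\,ds,\qquad x\in I,\ t\in(0,T),
\end{equation*}
while the right-endpoint conditions force the closure relations $\int_I e^{\pm s}f(u(s,t))\,ds=0$, with analogues for every $\partial_t^ku$. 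From the displayed identity, $|f(u)|\le(1+\tfrac12C_1)|u|$, and Cauchy–Schwarz, the quantity $M(t):=\int_{\mathbb S}u(x,t)^2\,dx=\int_I u^2\,dx$ obeys $|M'(t)|\le C_2M(t)$ on $(0,T)$; hence $M$ is either identically zero or never zero, and the whole problem reduces to excluding $M>0$.

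The hard part will be this last exclusion, which is where the hypotheses $\int_{\mathbb S}u_0\,dx\ge0$ and $\|u_0\|_\infty<3$ are used. From $\|u_0\|_\infty<3$ one has, at $t=0$, $E_2:=\int_{\mathbb S}(u^3+3u^2)\,dx=\int_{\mathbb S}u^2(u+3)\,dx\ge0$; since $E_2$ is conserved it is $\ge0$ at all times, and — using in addition conservation of $E_0=\int u$ and $E_1$ to control $\|u(t)\|_\infty$ — one should be able to keep $\|u(t)\|_\infty<3$ for all $t$, so that $E_2\ge0$ with equality precisely when $u(\cdot,t)\equiv0$. Thus everything comes down to showing $E_2=0$, equivalently to upgrading the two-sided differential inequality for $M$ to $M\equiv0$. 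I expect this to be the genuine obstacle; I would attack it by combining the rigidity of the Volterra representation (which pins $\partial_tu$ on all of $\mathbb S$ to the zero data on $\partial\omega$), together with its $\partial_t^k$-versions and the closure relations, with the real-analyticity of $t\mapsto u(t)$ from Theorem \ref{RoZh} — which first propagates the vanishing of $u$ on $\omega$ to all $t$ and then, as for the linearized equation (whose frequencies $\lambda_k=2\pi k(1+4\pi^2k^2)^{-1}$ are pairwise distinct), would let one read off that $u\equiv0$ — the sign hypotheses being precisely what keeps the nonlinear corrections compatible with this argument.
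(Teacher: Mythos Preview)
The paper does \emph{not} prove Theorem~\ref{RoZh1}; the result is quoted from \cite{RoZh} (note the attribution in the theorem header), and the only information the present paper gives about its proof is the remark that it ``used the conservation laws stated in \eqref{cl}''. So there is no in-paper proof to compare your proposal against.

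That said, your proposal itself has a genuine gap that you yourself flag. Up to and including the Volterra representation
\[
\partial_tu(x,t)=\int_\beta^x\cosh(x-s)\,f(u(s,t))\,ds,\qquad x\in I,
\]
the derivation is correct, and the observation that $E_2=\int_{\mathbb S}u^2(u+3)\,dx$ is nonnegative while $\|u(t)\|_\infty<3$ is preserved is the right use of the conservation laws and of the hypotheses \eqref{hypp1}--\eqref{hypp2}. But from there the argument stalls: the differential inequality $|M'(t)|\le C_2M(t)$ only gives two-sided exponential bounds on $M$ and does not by itself force $M\equiv0$; and nothing you have written produces $E_2=0$. Your last paragraph --- appealing to the hierarchy of $\partial_t^k$-identities, the ``closure relations'' $\int_I e^{\pm s}f(u(s,t))\,ds=0$, and the real-analyticity of $t\mapsto u(t)$ --- does not indicate a concrete mechanism that propagates the vanishing from $\omega$ to all of $\mathbb S$ at a fixed time. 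In particular, real-analyticity in $t$ only extends the vanishing on $\omega$ to all times (which you already have on $(0,T)$), not to new points in $x$; and the distinct-frequency remark applies to the \emph{linear} group, not to the nonlinear flow. As written, the ``hard part'' is not a detail to be filled in but the entire content of the theorem, and your sketch does not supply it.
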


\begin{rem}

\begin{enumerate}

\item
Notice that Theorem \ref{RoZh1} contain the global hypothesis \eqref{hypp1} and \eqref{hypp2}. 
\item
The proof used the conservation laws stated in \eqref{cl}. This explains the hypothesis $u_0\in H^1(\mathbb S)$.

\item As it was remarked in \cite{RoZh}, the example in \eqref{ex1}  shows that Theorem \ref{RoZh1} fails if one just assumes that $u_0\in H^s(\mathbb S)\cap L^{\infty}(\mathbb S),\,s\in[0,1/2)$.
\end{enumerate}
\end{rem}
Our results in Theorem \ref{THA2} and Theorem \ref{THA3} extend to the IPBVP \eqref{bbm-p}:

\begin{thm}\label{THAP2} Let $u=u(x,t)$ be a solution of the IPBVP \eqref{bbm-p} provided by Theorem \ref{RoZh}.
If there exist $t_0\in(-T,T) $ and $a,b\in \mathbb S\cong[0,1)$ with $ \,a<b$ such that 
\be 
\label{uc-cond1p}
\begin{cases}
\begin{aligned}
&(i)\;\;\,\;u(x,t_0)=-1,\;\;\;a.e. \;\;x\in(a,b),\\
&(ii)\, \,\;\partial_tu(b,t_0)\geq\partial_tu(a,t_0),
\end{aligned}
\end{cases}
\ee
then $\,u(x,t)=-1$ for all $(x,t)\in \mathbb S\times [-T,T]$,  if $s>1/2$, and a.e. for $(x,t)\in \mathbb S\times [-T,T]$, if $s\in [0,1/2]$.
\end{thm}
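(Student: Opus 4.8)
The plan is to carry out the periodic analogue of the argument behind Theorem~\ref{THA2}; the only genuinely new ingredient is the sign analysis of the periodic Green's function of $1-\partial_x^2$.

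First I would derive a pointwise formula for $\partial_t u$. Applying $(1-\partial_x^2)$ to the equation in \eqref{bbm-p} and writing $f(y)=y+y^2/2$ gives $\partial_t u=-(1-\partial_x^2)^{-1}\partial_x f(u)$. Since $f(-1)=-\tfrac12$ is a constant and $(1-\partial_x^2)^{-1}\partial_x$ annihilates constants on $\mathbb S$, one may replace $f(u)$ by
\[
h:=f(u)-f(-1)=\tfrac12\,(u+1)^2\ \ge\ 0 ,
\]
so that $\partial_t u(\cdot,t)=-(1-\partial_x^2)^{-1}\partial_x h(\cdot,t)$. Let $G$ be the $1$-periodic Green's function of $1-\partial_x^2$, explicitly $G(x)=\cosh(x-\tfrac12)/(2\sinh\tfrac12)$ for $x\in[0,1]$, extended periodically; then $(1-\partial_x^2)^{-1}\partial_x h=(\partial_x G)*h$ (periodic convolution), hence
\[
\partial_t u(x,t_0)=\int_{\mathbb S}\psi(x-y)\,h(y,t_0)\,dy ,\qquad \psi:=-\partial_x G ,
\]
where $\psi(z)=\sinh(\tfrac12-z)/(2\sinh\tfrac12)$ for $z\in(0,1)$. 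Since $\psi\in L^\infty(\mathbb S)$ and $h(\cdot,t_0)\in L^1(\mathbb S)$, the right-hand side is a continuous function of $x$, which is what makes the pointwise condition (ii) in \eqref{uc-cond1p} meaningful also when $s\in[0,1/2]$.

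Next I would compute the sign of the kernel difference. By (i) in \eqref{uc-cond1p}, $h(\cdot,t_0)=0$ a.e.\ on $(a,b)$, so
\[
\partial_t u(b,t_0)-\partial_t u(a,t_0)=\int_{\mathbb S\setminus(a,b)}\big(\psi(b-y)-\psi(a-y)\big)\,h(y,t_0)\,dy .
\]
Set $\ell:=b-a\in(0,1)$ and $s:=b-y\pmod 1$. As $y$ runs over $\mathbb S\setminus(a,b)$, the point $s$ runs (up to a null set) over $(\ell,1)$, and correspondingly $a-y\equiv s-\ell\in(0,1-\ell)$; since both $s$ and $s-\ell$ lie in $(0,1)$, the explicit formula for $\psi$ together with the strict monotonicity of $\sinh$ gives
\[
\psi(b-y)-\psi(a-y)=\frac{\sinh(\tfrac12-s)-\sinh(\tfrac12-s+\ell)}{2\sinh\tfrac12}\ <\ 0
\]
for a.e.\ $y\in\mathbb S\setminus(a,b)$. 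Combined with $h\ge0$ this forces $\partial_t u(b,t_0)-\partial_t u(a,t_0)\le0$, while (ii) forces the reverse inequality; hence the quantity vanishes, and, the kernel being strictly negative off $(a,b)$, we conclude $h(\cdot,t_0)\equiv0$ a.e.\ on $\mathbb S$, i.e.\ $u(\cdot,t_0)=-1$ a.e.\ (and for every $x$ if $s>1/2$, by the embedding $H^s(\mathbb S)\hookrightarrow C(\mathbb S)$). Finally, since the constant function $u\equiv-1$ is itself a solution of \eqref{bbm-p}, the uniqueness part of Theorem~\ref{RoZh} (applied with initial time $t_0$ and using the time-translation invariance of the equation) shows that our solution equals that constant: $u(x,t)=-1$ for all $(x,t)\in\mathbb S\times[-T,T]$ when $s>1/2$, and for a.e.\ $(x,t)$ when $s\in[0,1/2]$.

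The main obstacle is the sign analysis of the second step. On $\R$ the corresponding kernel $\tfrac12\,\sgn(z)e^{-|z|}$ has a single sign on each half-line, so there the analogue of the displayed inequality is immediate; on $\mathbb S$, however, $\partial_x G$ changes sign within a period, so one must keep careful track of all arguments modulo $1$. The explicit hyperbolic form of $G$ makes the needed inequality transparent once the reduction to $s\in(\ell,1)$, $s-\ell\in(0,1-\ell)$ is set up correctly, and that bookkeeping is the one delicate point; the rest (the pointwise representation, Young's inequality for the continuity of $\partial_t u$ when $s\in[0,1/2]$, and the final appeal to uniqueness) is routine.
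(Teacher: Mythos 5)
Your proof is correct and follows essentially the same route as the paper: evaluate the equation at $a$ and $b$, complete the square via $f(u)-f(-1)=\tfrac12(u+1)^2\ge0$ (the remark after Lemma \ref{key-uc}), and use the strict monotonicity of the periodic kernel difference, which is exactly the inequalities \eqref{007}--\eqref{007a} that the paper invokes. The only difference is that you actually verify \eqref{007} via the explicit hyperbolic formula \eqref{007c} and the reduction to $s\in(\ell,1)$, $s-\ell\in(0,1-\ell)$, a computation the paper states as "just checking"; this is a welcome addition, not a deviation.
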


\begin{rem}

One can easily see that if the solution $u\in C([-T,T]:L^2(\mathbb S))$, then $\partial_t u\in C(\mathbb S\times [-T,T])$. Hence, the pointwise condition \eqref{uc-cond1p} is well-defined.

The following theorem and its remark  confirm that the value $-1$ is the only one for which  Theorem \ref{THAP2} holds.
\end{rem}
\begin{thm}\label{THAP3} Given any $c_0\in \mathbb R-\{-1\}$ there exist $a, b\in [0,1)\cong \mathbb S$ and  $u_0\in C^{\infty}(\mathbb S)-\{c_0\}$ such that the corresponding solution $u=u(x,t)$ of the IPBVP \eqref{bbm-p} provided  by Theorem \ref{RoZh} satisfies that
\be 
\label{uc-cond3p}
\,\;u_0(x)=c_0,\;\;\;\;x\in(a,b)\subset [0,1)\cong \mathbb S,
\ee
and
\be
\label{uc-cond4p}
 \;\partial_tu(a,0)=\partial_tu(b,0).
\ee
\end{thm}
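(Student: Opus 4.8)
The plan is to use the equation to turn condition \eqref{uc-cond4p} into a single scalar linear constraint on $f(u_0)$, where $f(y)=y+y^2/2$, and then to satisfy that constraint by a small, suitably normalized perturbation of the constant function $c_0$. Since $\partial_x u+u\partial_x u=\partial_x f(u)$, the equation in \eqref{bbm-p} gives at $t=0$
\[
\partial_t u(x,0)=-(1-\partial_x^2)^{-1}\partial_x f(u_0)(x)=-\int_{\mathbb S}G'(x-y)\,f(u_0(y))\,dy ,
\]
where $G$ is the Green's function of $1-\partial_x^2$ on $\mathbb S\cong[0,1)$, namely $G(x)=\cosh(x-1/2)/(2\sinh(1/2))$ for $x\in[0,1]$ extended $1$-periodically, so that $G\in C(\mathbb S)$ and $G'\in L^\infty(\mathbb S)$ (taken in the a.e. sense). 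Because $u_0\in C^\infty(\mathbb S)$ the right-hand side is a smooth function of $x$, and \eqref{uc-cond4p} is equivalent to
\[
\int_{\mathbb S}H(y)\,f(u_0(y))\,dy=0,\qquad H(y):=G'(a-y)-G'(b-y).
\]

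Next I would record two elementary facts about $H$, valid for any $a,b$ with $0\le a<b<1$. First, $\int_{\mathbb S}H\,dy=0$: indeed $\int_{\mathbb S}G'\,dz=0$ since $G$ is continuous and $1$-periodic, and Lebesgue measure on $\mathbb S$ is translation invariant. Second, $H<0$ on the complementary arc $\mathbb S\setminus[a,b]$: for $y$ in this set the two numbers $a-y$ and $b-y$ either both lie in $(-1,0)$ or both lie in $(0,1)$, and on each of these intervals $G'$ is smooth and strictly increasing, while $a-y<b-y$. In particular $H$ is negative, hence nonvanishing, on $\mathbb S\setminus[a,b]$.

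For the construction, I would fix $\phi\in C^\infty(\mathbb S)$ with $\phi\ge0$, $\phi\not\equiv0$, and $\text{supp}\,\phi$ a compact subset of the open arc $\mathbb S\setminus[a,b]$, so that $\phi$ and all its derivatives vanish at $a$ and $b$, and set $u_0=c_0+\varepsilon\phi$. Then $u_0\in C^\infty(\mathbb S)$, $u_0\equiv c_0$ on $(a,b)$ (so \eqref{uc-cond3p} holds), and $u_0\not\equiv c_0$ as soon as $\varepsilon\ne0$. Using $f(c_0+w)-f(c_0)=(1+c_0)w+w^2/2$ together with the first fact above,
\[
\int_{\mathbb S}H\,f(u_0)\,dy=\varepsilon(1+c_0)\!\int_{\mathbb S}H\phi\,dy+\frac{\varepsilon^2}{2}\!\int_{\mathbb S}H\phi^2\,dy ,
\]
and this vanishes for the value $\varepsilon_0:=-2(1+c_0)\bigl(\int_{\mathbb S}H\phi\,dy\bigr)\big/\bigl(\int_{\mathbb S}H\phi^2\,dy\bigr)$. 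By the second fact, $\int_{\mathbb S}H\phi^2\,dy<0$ and $\int_{\mathbb S}H\phi\,dy<0$, so $\varepsilon_0$ is well defined, and $1+c_0\ne0$ precisely because $c_0\ne-1$; hence $\varepsilon_0\ne0$. Then $u_0:=c_0+\varepsilon_0\phi$ is the required datum, and $u$ is the solution furnished by Theorem \ref{RoZh} (for any $s\ge0$, since $C^\infty(\mathbb S)\subset H^s(\mathbb S)$; then $\partial_t u$ is continuous, so \eqref{uc-cond4p} is meaningful).

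The only delicate point is guaranteeing that the perturbation parameter $\varepsilon_0$ is nonzero, i.e.\ that the produced $u_0$ is not the constant $c_0$; this is exactly where $c_0\ne-1$ enters, for when $c_0=-1$ the linear term above vanishes and the statement itself breaks down: then $f(u_0)-f(-1)=(u_0+1)^2/2\ge0$ while $H<0$ off $[a,b]$, so \eqref{uc-cond4p} would force $u_0\equiv-1$, in accordance with Theorem \ref{THAP2}. If one prefers to avoid the explicit formula for $\varepsilon_0$, the same conclusion follows from the implicit function theorem applied to $\varepsilon\mapsto\int_{\mathbb S}H\,f(c_0+\varepsilon\phi)\,dy$ at $\varepsilon=0$, whose derivative there is $(1+c_0)\int_{\mathbb S}H\phi\,dy\ne0$. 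The same scheme reproves Theorem \ref{THA3} on $\R$, where the analogous reduction produces two scalar conditions on the weighted tails of $u_0$ outside $[a,b]$ rather than one, and again fails only for $c_0=-1$.
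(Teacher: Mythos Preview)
Your proof is correct and follows a genuinely different route from the paper's. Both arguments reduce \eqref{uc-cond4p} to the vanishing of the linear functional
\[
Q(u_0)=\int_{\mathbb S}\bigl(\partial_x\mathcal G(b-y)-\partial_x\mathcal G(a-y)\bigr)\bigl(f(u_0(y))-f(c_0)\bigr)\,dy,
\]
and both use the sign property \eqref{007}. From there the paper proceeds by an intermediate value argument: it builds two smooth data $v_1=c_0+\psi_1$ and $v_2=c_0+\psi_2$, with $\psi_1,\psi_2$ supported in the two components of $\mathbb S\setminus[a,b]$ and chosen (using $f'(c_0)=1+c_0\neq 0$) so that $Q(v_1)>0>Q(v_2)$; continuity of $Q$ along the segment $\lambda v_1+(1-\lambda)v_2$ then yields some $\lambda_0\in(0,1)$ with $Q=0$. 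You instead take a single bump $\phi$ supported in the complementary arc, expand $Q(c_0+\varepsilon\phi)$ as a quadratic in $\varepsilon$ (here $\int_{\mathbb S}H=0$ kills the constant term), and solve explicitly for a nonzero root $\varepsilon_0$. Your approach is more elementary and constructive---it gives an explicit formula for $u_0$---while the paper's IVT argument is more robust, since it does not rely on the nonlinearity being quadratic and would survive replacing $f$ by any smooth function with $f'(c_0)\neq 0$. Your verification that $H<0$ on $\mathbb S\setminus[a,b]$ via the monotonicity of $G'$ on each of $(-1,0)$ and $(0,1)$ is a clean way to recover \eqref{007}, and your closing observation that the construction collapses exactly when $c_0=-1$ (forcing $u_0\equiv-1$) nicely mirrors Theorem~\ref{THAP2}.
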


\begin{rem}

The following example is an extension of that attained in  \cite{RoZh}. Let $f(x)=x+x^2/2$, $c_0\in \R-\{-1\}$ and $a, b\in (0,1)\cong \mathbb S, a< b$.  Let 
$$
f^{-1}(f(c_0))=\{c_0,c_0'\},\;\;\;\;\;c_0\neq c_0'.
$$ 
Then
\be
\label{zz1}
u(x,t)=u_0(x)=
\begin{cases}
\begin{aligned}
& c_0.\;\;\;\;x\in[a,b],\\
&c_0',\;\;\;\,x\notin [a,b].
\end{aligned}
\end{cases}
\ee
is a solution of the IPBVP \eqref{bbm-p}, with  $u\in C([-T,T]:H^s(\mathbb S))$, $s\in[0,1/2)$, and $\partial_tu(x,t)\equiv 0$.

\end{rem}

Also we have: 

\begin{thm}\label{THAP4} Under the appropriate modifications the results in Theorem \ref{THA4} still hold for solutions of the IPBVP \eqref{bbm-p} provided by Theorem \ref{RoZh} with $s>1/2$.

\end{thm}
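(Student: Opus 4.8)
The plan is to mimic the argument used for Theorem \ref{THA4}, replacing the real line by the circle $\mathbb S\cong[0,1)$ and the line integrals over $\R$ (or over a half-line) by integrals over arcs of $\mathbb S$. The starting point is the same identity one uses in the non-periodic case: writing the BBM equation \eqref{bbm-p} as $\partial_t u=-(1-\partial_x^2)^{-1}\partial_x f(u)$ with $f(y)=y+y^2/2$, one has $(1-\partial_x^2)\partial_t u=-\partial_x f(u)$, i.e.
\be
\partial_t u-\partial_x^2\partial_t u=-\partial_x f(u).
\ee
Integrating this identity in $x$ over the arc $(a,b)$ at time $t_0$, and using \eqref{cond-aa1} (so that $f(u(a,t_0))=f(u(b,t_0))=f(c_0)$), the boundary terms coming from $\partial_x f(u)$ cancel, and one is left with
\be
\int_a^b \partial_t u(x,t_0)\,dx=\partial_x\partial_t u(b,t_0)-\partial_x\partial_t u(a,t_0).
\ee
The point of hypotheses of the type \eqref{cond-1}--\eqref{cond-2} (suitably transcribed) is then to force, via a maximum-principle / sign argument applied to the elliptic operator $(1-\partial_x^2)$ on $\mathbb S$, that $\partial_t u(\cdot,t_0)$ has a fixed sign, and then that it vanishes identically; combined with the analyticity in time from Theorem \ref{RoZh}, $u\equiv u(\cdot,t_0)$, and then the explicit-solution discussion shows $u\equiv c_0$.

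Concretely, the key steps, in order, are: (1) transcribe conditions \eqref{cond-1} and \eqref{cond-2} to the periodic setting — here the only real change is that "$x\notin[a,b]$'' must be read as the complementary arc $(b,a+1)$ in $\mathbb S$, and one must check that the orientation conventions in the two inequalities $\partial_tu(b,t_0)\gtrless\partial_tu(a,t_0)$ remain the correct ones once $\mathbb R$ is wrapped onto $\mathbb S$; (2) write $w:=\partial_t u(\cdot,t_0)\in C^\infty(\mathbb S)$ (smoothness since $s>1/2$, using that $(1-\partial_x^2)^{-1}\partial_x$ gains a derivative and $f(u)$ is at least as smooth as $u$, plus the persistence from Theorem \ref{RoZh}), and observe $(1-\partial_x^2)w=-\partial_x f(u(\cdot,t_0))=:g$, where $g\le 0$ on one arc and $g\ge 0$ on the other by the transcribed hypotheses together with \eqref{cond-aa1}; (3) run the maximum-principle argument: $w$ solves a periodic ODE $w-w''=g$ on $\mathbb S$, so $w=K*g$ with the (positive, periodic) Green's function $K$ of $1-\partial_x^2$ on $\mathbb S$, and the sign structure of $g$ together with the boundary inequality on $w'$ at $a,b$ forces $w\le 0$ (resp. $w\ge 0$) everywhere, and then $\int_{\mathbb S}w\,dx=\int_{\mathbb S}g\,dx=0$ (since $g$ is an $x$-derivative of a periodic function) forces $w\equiv 0$; (4) conclude $f(u(\cdot,t_0))\equiv$ const, hence $u(\cdot,t_0)\equiv c_0$ on the arc where $f(u)=f(c_0)$ is forced, hence by the analytic dependence / the time-independence argument $u\equiv c_0$ on $\mathbb S\times[-T,T]$.

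The main obstacle I expect is step (3), the periodic maximum-principle / Green's-function step: on $\mathbb R$ one has a half-line integration available (integrating $\partial_x(\partial_t u-\partial_x^2\partial_t u)$ from $-\infty$ or using decay), whereas on $\mathbb S$ there is no "point at infinity'' to absorb boundary terms, so one must argue entirely within the two complementary arcs and be careful that the two contributions $\int_a^b$ and $\int_b^{a+1}$ combine with the correct signs — in particular that the zero-mean constraint $\int_{\mathbb S}g=0$ is exactly what replaces the decay at infinity. A secondary, more technical point is making sure the pointwise traces $\partial_t u(a,t_0)$, $\partial_t u(b,t_0)$ and the derivative traces $\partial_x\partial_t u(a,t_0)$, $\partial_x\partial_t u(b,t_0)$ are all well-defined and that the integration by parts is legitimate; this is where the restriction $s>1/2$ enters (so that $u(\cdot,t_0)\in C^1(\mathbb S)$ and $w\in C^2(\mathbb S)$), exactly as flagged in the statement, and it is the reason the "appropriate modifications'' exclude the low-regularity range that Theorem \ref{THA4} is not claimed for anyway. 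Once these trace/regularity issues are handled, the remaining computations are routine transcriptions of the $\R$-case.
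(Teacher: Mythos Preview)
Your plan contains a genuine gap in step (2)--(3). You set $g:=-\partial_x f(u(\cdot,t_0))$ and then assert that ``$g\le 0$ on one arc and $g\ge 0$ on the other by the transcribed hypotheses.'' But the hypotheses \eqref{cond-1}--\eqref{cond-2} control the sign of $f(u(\cdot,t_0))-f(c_0)$ on each arc, not the sign of its $x$-derivative. A function bounded above by $f(c_0)$ on $[a,b]$ with equality at the endpoints can have its derivative change sign arbitrarily many times inside; nothing in the hypotheses prevents this. So the sign input needed for your maximum-principle/zero-mean argument is simply not available. (You also slip from the hypothesis on $w(a),w(b)$ to a ``boundary inequality on $w'$ at $a,b$'' in step (3); the assumption in \eqref{cond-1}--\eqref{cond-2} is on $\partial_t u=w$, not on $\partial_x\partial_t u=w'$.)

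The paper's route is shorter and avoids this trap: one integrates by parts in the convolution so that the derivative lands on the Green's function, writing
\[
\partial_t u(x,t_0)=-\int_{\mathbb S}\partial_x\mathcal G(x-y)\,\big(f(u(y,t_0))-f(c_0)\big)\,dy,
\]
and then computes $\partial_t u(b,t_0)-\partial_t u(a,t_0)$ directly. The integrand factors as $\big(\partial_x\mathcal G(b-y)-\partial_x\mathcal G(a-y)\big)\big(f(u(y,t_0))-f(c_0)\big)$; the first factor has a fixed sign on each of the two arcs by the periodic kernel inequalities \eqref{007}--\eqref{007a} (the exact analogue of \eqref{0006}--\eqref{006b}), and the second factor has a fixed sign on each arc by hypothesis. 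This forces every piece of the integral to have the same sign, and the boundary inequality on $w(b)-w(a)$ then pins everything to zero --- no maximum principle, no mean-zero trick, no integration over $(a,b)$ is needed. In short: move the $\partial_x$ onto $\mathcal G$, not onto $f(u)$, and then apply Lemma~\ref{key-uc}(b) verbatim.
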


The argument of proof of these results allows us to get a slight improvement of those found in \cite{LiPo} concerning the Camassa-Holm (CH)  equation. The CH equation was first observed by Fuchssteiner and Fokas \cite{FF} in their work on hereditary symmetries. Later, it was written explicitly and derived physically as a model for shallow water waves  by Camassa and Holm \cite{CH}. The CH equation has received considerable attention due to its remarkable properties, among them the fact that it is a bi-Hamiltonian completely integrable model and the existence of \lq\lq peakon'' solutions (solitons)
\be
\label{peakon}
u_c(x,t)=c\, e^{-|x-ct|},\;\;\;\;\;\;c>0,
\ee
(see \cite{LiPo} and references therein).

The CH equation can be written as
\begin{equation}\label{CH}
\partial_tu+ u \partial_xu+\partial_x(1-\partial_x^2)^{-1}\big(u^2+\frac{1}{2} (\partial_xu)^2\big)=0, \hskip5pt \;t,\,x\in\R.
\end{equation}

For our purpose here, we recall the following local "well-posedness" for the IVP associated to the CH equation \eqref{CH} obtained in \cite{LiPoSi}:

\begin{thm}[\cite{LiPoSi}]\label{thm1-ch}
Given  $u_0\in X\equiv H^1(\R)\cap W^{1,\infty}(\R)$, there exist  $T=T(\|u_0\|_X)>0$ and a unique solution $u=u(x,t)$ of the IVP associated to the CH equation \eqref{CH}
such that
\begin{equation}
\begin{aligned}
\label{class-sol}
u\in
&C([-T,T]\!:\!H^1(\R))\cap C^1((-T,T)\!:\!L^2(\R))\\
&\cap L^{\infty}([-T,T]\!:\!W^{1,\infty}(\R))\equiv Z_T\cap L^{\infty}([-T,T]\!:\!W^{1,\infty}(\R)).
\end{aligned}
\end{equation}
Moreover, the map $u_0\mapsto u$, taking the data to the solution, is locally continuous from $X$
into $Z_{T}$.
\end{thm}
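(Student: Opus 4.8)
The plan is to regard \eqref{CH} as a transport equation driven by a nonlocal lower order forcing term and to run the standard \emph{a priori} estimate / regularization / uniqueness scheme, the only delicate point being a derivative count in the nonlocal term at this low regularity. Write $G(u)=\partial_x(1-\partial_x^2)^{-1}\big(u^2+\tfrac12(\partial_xu)^2\big)$, so that \eqref{CH} reads $\partial_tu+u\,\partial_xu=-G(u)$. Since $\partial_x(1-\partial_x^2)^{-1}$ is convolution with the bounded, exponentially decaying kernel $-\tfrac12\sgn(x)e^{-|x|}$, the map $G$ sends $X=H^1(\R)\cap W^{1,\infty}(\R)$ continuously into itself; moreover $\partial_x^2(1-\partial_x^2)^{-1}=-I+(1-\partial_x^2)^{-1}$, so any derivative that falls on $G$ yields only a zero order operator, a fact used repeatedly. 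A direct computation (pair \eqref{CH} with $u$, and $\partial_x$ of \eqref{CH} with $\partial_xu$) shows that $H_1[u]=\tfrac12\int_{\R}\big(u^2+(\partial_xu)^2\big)dx$ is conserved along smooth solutions; hence $\|u(t)\|_{H^1}=\|u_0\|_{H^1}$ and, by Sobolev embedding, $\|u(t)\|_{\infty}\le C\|u_0\|_{H^1}$ throughout the lifespan.

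The time of existence is localized by an \emph{a priori} bound on $\|\partial_xu\|_{\infty}$. Let $q(t,x)$ be the flow of $u$, $\partial_tq=u(t,q)$, $q(0,x)=x$, which is well defined and bi-Lipschitz because $u\in W^{1,\infty}$. Setting $v=\partial_xu$ and differentiating the equation, one gets along trajectories
\be
\frac{d}{dt}\,v(t,q(t,x))=\Big(u^2-\tfrac12v^2-(1-\partial_x^2)^{-1}\big(u^2+\tfrac12v^2\big)\Big)(t,q(t,x)).
\ee
The non quadratic terms are bounded by $C\|u_0\|_{H^1}^2$, and the quadratic term $-\tfrac12v^2$ carries a favorable sign; comparing $\sup_x v(t,\cdot)$ and $-\inf_x v(t,\cdot)$ with the Riccati equation $\mu'=C-\tfrac12\mu^2$ produces $T=T(\|u_0\|_X)>0$ with $\sup_{[0,T]}\|\partial_xu(t)\|_{\infty}\le C(\|u_0\|_X)$. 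Together with the conservation of $\|u\|_{H^1}$ this controls $\|u(t)\|_X$ on $[0,T]$.

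For existence one mollifies the data, $u_0^{\varepsilon}=\rho_{\varepsilon}\ast u_0\in H^{\infty}(\R)$, solves by the classical $H^s$ theory ($s>3/2$), observes that the estimates above are uniform in $\varepsilon$ on a common interval $[0,T]$, and passes to the limit; the $C^1((-T,T):L^2)$ regularity is then read off the equation via $\partial_tu=-u\,\partial_xu-G(u)\in L^2$. For the strong convergence of the approximations and for the continuity of $u_0\mapsto u$ from $X$ into $Z_T$ I would run a Bona--Smith argument: estimate $u^{\varepsilon}-u^{\varepsilon'}$ in $L^2$ (the estimate closing by Gronwall because the transport velocities are uniformly Lipschitz), bound it crudely in $H^1$ by the uniform $X$ bound, and interpolate, obtaining continuity in the topology of $Z_T$ only, as stated.

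The step I expect to be the real obstacle is uniqueness (and, as just indicated, the $L^2$ difference estimate underlying continuous dependence). For two solutions with the same data set $w=u_1-u_2$, so that $\partial_tw+u_1\,\partial_xw=-w\,\partial_xu_2-\big(G(u_1)-G(u_2)\big)$. Every term pairs harmlessly with $w$ in $L^2$ except $\partial_x(1-\partial_x^2)^{-1}\big((\partial_xu_1)^2-(\partial_xu_2)^2\big)=\partial_x(1-\partial_x^2)^{-1}\big((\partial_xu_1+\partial_xu_2)\,\partial_xw\big)$, whose $L^2$ norm is bounded by $\|\partial_xw\|_{L^2}$ but \emph{not} by $\|w\|_{L^2}$ --- the coefficients $\partial_xu_j$ are merely bounded, not Lipschitz, so neither the $L^2$ nor the $H^1$ energy estimate for $w$ closes directly. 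I would resolve this by passing to Lagrangian coordinates, in which the transport operator disappears and $u$ together with $\partial_xu$ (the latter expressed through $\partial_xq$) satisfy along each trajectory a closed system of nonlocal ordinary differential equations depending Lipschitz--continuously on the unknowns and the data; an equivalent route regularizes the transport equation and absorbs the offending term via a DiPerna--Lions type commutator estimate after integrating by parts, again using that $\partial_x^2(1-\partial_x^2)^{-1}$ has order zero. Either way one reaches $\|w(t)\|_{L^2}\le e^{Ct}\|w(0)\|_{L^2}$ on $[0,T]$, which gives uniqueness and closes the argument.
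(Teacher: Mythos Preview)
The paper does not prove this theorem: it is quoted verbatim from \cite{LiPoSi} (Linares--Ponce--Sideris) as background for the unique continuation result Theorem~\ref{IVPCH2}, and no argument is supplied. There is therefore nothing in the present paper to compare your sketch against.

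For what it is worth, your outline is in the spirit of the argument actually given in \cite{LiPoSi}: one uses the Lagrangian flow $q$ to turn the problem into a fixed-point problem for the pair $(u\circ q,\,\partial_xu\circ q)$ (equivalently for $(u,q)$), so that the transport operator disappears and the nonlocal terms become Lipschitz maps on $X$; this simultaneously yields existence, uniqueness, and the $L^\infty_t W^{1,\infty}_x$ control, with the Riccati-type bound on $\partial_xu$ along characteristics fixing the lifespan. Your identification of the obstruction to a direct Eulerian $L^2$ energy estimate for the difference $w=u_1-u_2$ is exactly right, and the Lagrangian route you propose is the one taken there; the DiPerna--Lions alternative you mention is plausible but not what is done in \cite{LiPoSi}.
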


We notice that the peakons belong to the space $X$ in Theorem \ref{thm1-ch}. However, the notion of well-posedness here is not as strong as those stated in previous theorems. For other \lq\lq well-posedness" results concerning the IVP \eqref{CH} we refer to \cite{LiPoSi} and references therein.

 In \cite{LiPo} this unique continuation principle was derived :
\begin{thm}[\cite{LiPo}] \label{IVPCH1}
Let $\,u=u(x,t)$ be a solution of the IVP associated to the CH equation \eqref{CH} in the class described in Theorem \ref{thm1-ch}.
If there exists an open set $\,\Omega\subset \R\times [0,T]$ such that 
\begin{equation}
\label{cond-ch}
u(x,t)=0,\;\;\;\;\;\;\;(x,t)\in \Omega,
\end{equation}
then $\,u\equiv 0$.
\end{thm}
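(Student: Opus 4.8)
The plan is to exploit the non-local term of \eqref{CH}, where $(1-\partial_x^2)^{-1}$ acts as convolution against $p(x)=\tfrac12 e^{-|x|}$, in order to turn the hypothesis that $u$ vanishes on an open set into the vanishing of $u$ on an entire time slice, and then to propagate this using the uniqueness part of Theorem \ref{thm1-ch}. First I would use that $\Omega$ is open and non-empty to fix $a<b$ and $t_1<t_2$ with $(a,b)\times(t_1,t_2)\subset\Omega$. On this box $u$, $\partial_x u$ and $\partial_t u$ all vanish (the last one because $u\equiv 0$ there and $\partial_t u\in C((-T,T)\!:\!L^2(\R))$). Since $\partial_x(1-\partial_x^2)^{-1}h=p'*h$ with $p'(x)=-\tfrac12\sgn(x)e^{-|x|}$, evaluating \eqref{CH} on the box leaves only the non-local term, so that
\be\label{ch-key}
(p'*G)(x,t)=0,\qquad(x,t)\in(a,b)\times(t_1,t_2),\qquad G:=u^2+\tfrac12(\partial_x u)^2\ge0.
\ee
For each fixed $t$, $G(\cdot,t)\in L^1(\R)$ because $u(\cdot,t)\in H^1(\R)$, so $p'*G$ is continuous in $x$ and \eqref{ch-key} holds in the pointwise sense.

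The core of the argument is to deduce from \eqref{ch-key} that $G(\cdot,t)\equiv0$. Fixing $t$ and suppressing it, I would introduce
\be\label{ch-phipsi}
\phi(x)=e^{-x}\int_{-\infty}^{x}e^{y}G(y)\,dy,\qquad \psi(x)=e^{x}\int_{x}^{\infty}e^{-y}G(y)\,dy,
\ee
so that $p'*G=-\tfrac12(\phi-\psi)$, and a direct differentiation gives $\phi'=G-\phi$ and $\psi'=\psi-G$ on $\R$. By \eqref{ch-key} one has $\phi=\psi$ on $(a,b)$, hence $\phi'=\psi'$ there; inserting the two differential relations yields $\phi=\psi=G$ on $(a,b)$, and since $G\equiv0$ on $(a,b)$ we obtain $\phi\equiv\psi\equiv0$ on $(a,b)$. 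As $G\ge0$, the vanishing of $\int_{-\infty}^{x}e^{y}G\,dy$ for $x<b$ forces $G=0$ a.e.\ on $(-\infty,b)$, while the vanishing of $\int_{x}^{\infty}e^{-y}G\,dy$ for $x>a$ forces $G=0$ a.e.\ on $(a,\infty)$; since $a<b$ this gives $G(\cdot,t)\equiv0$ a.e.\ on $\R$, hence $u(\cdot,t)=0$ in $H^1(\R)$ for every $t\in(t_1,t_2)$.

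To conclude, I would propagate this vanishing. The set $S=\{t\in[-T,T]\colon u(\cdot,t)=0\ \text{in}\ H^1(\R)\}$ is non-empty by the previous step, closed because $u\in C([-T,T]\!:\!H^1(\R))$, and open because, whenever $t_*\in S$, applying Theorem \ref{thm1-ch} with zero data at time $t_*$ and invoking uniqueness shows that $u$ coincides with the zero solution near $t_*$; since $[-T,T]$ is connected, $S=[-T,T]$, i.e.\ $u\equiv0$. The step I expect to be the main obstacle is the passage from the non-local identity \eqref{ch-key} to the pointwise vanishing of $G$: the key observation is that $p'*G$ is, up to sign, the difference of the two exponentially weighted primitives in \eqref{ch-phipsi}, which satisfy first-order linear ODEs, so that vanishing on a single interval spreads to the whole line; the sign condition $G\ge0$ is essential here, as it is what upgrades ``the primitives vanish'' to ``$G$ vanishes a.e.'' The remaining points—making sense of \eqref{CH} pointwise in $x$ at fixed $t$ (legitimate since $G(\cdot,t)\in L^1(\R)$) and the open–closed continuation—are routine given Theorem \ref{thm1-ch}.
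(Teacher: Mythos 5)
Your proof is correct, and it is worth noting that the paper does not actually reprove this statement: Theorem \ref{IVPCH1} is quoted from \cite{LiPo}, and the machinery the paper imports from there is Lemma \ref{key-uc}, a two-point comparison based on the strict monotonicity of the kernel $-\sgn(\cdot)e^{-|\cdot|}/2$. In that approach one sets $G=u^2+\tfrac12(\partial_xu)^2\ge 0$, notes that $G\le 0$ on $[a,b]$ (indeed $G=0$ there) and $G\ge 0$ on $[a,b]^c$, evaluates the equation only at the two endpoints $x=a$ and $x=b$ of a slice of $\Omega$, and subtracts; the inequalities \eqref{0006}--\eqref{006b} show the resulting quantity is a sum of nonnegative integrals that must all vanish, forcing $G\equiv 0$. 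Your route instead uses the vanishing of $p'*G$ on the whole interval $(a,b)$: writing $p'*G=-\tfrac12(\phi-\psi)$ with the exponentially weighted primitives satisfying $\phi'=G-\phi$, $\psi'=\psi-G$, you get $\phi=\psi=G=0$ on $(a,b)$ and then, by positivity of $G$, on all of $\R$. Both arguments rest on the same two facts (the explicit kernel and $G\ge 0$), and both then conclude by the uniqueness/continuation step, so your proof is a legitimate, self-contained alternative. The trade-off is that the two-point lemma needs only endpoint data plus sign information, which is exactly what powers the sharper Theorem \ref{IVPCH2} (vanishing on $[a,b]$ at a \emph{single} time $t_0$ plus a monotonicity condition on $\partial_tu(\cdot,t_0)$); your interval/ODE version consumes the full space-time box and would need the endpoint refinement to recover that stronger statement. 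Two small points you handled correctly but should keep explicit: $p'*G(\cdot,t)$ is continuous (dominated convergence, $G(\cdot,t)\in L^1$), so the a.e.\ identity on $(a,b)$ upgrades to a pointwise one; and the openness of $S$ uses uniqueness in the class of Theorem \ref{thm1-ch} after a time translation.
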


Our argument here shows the following:
\begin{thm} \label{IVPCH2}
Let $\,u=u(x,t)$ be a solution of the IVP associated to the CH equation \eqref{CH} in the class described in Theorem \ref{thm1-ch}.
If there exist $t_0\in(-T,T)$ and $a, b\in\R, a<b$, such that
\begin{equation}
\label{cond1-ch}
u(x,t_0)=0,\;\;\;\;\;\;\;x\in[a,b],
\end{equation}
and
\be
\label{cond2-ch}
\partial_tu(x,t_0)\;\;\;\;\text{is not strictly decreasing on}\;\;\;\;[a,b],
\end{equation}
then $\,u\equiv 0$.
\end{thm}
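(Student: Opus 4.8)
The plan is to mimic the argument used for Theorem \ref{THA2}, adapting it to the CH equation \eqref{CH} whose nonlocal term plays the role that $(1-\partial_x^2)^{-1}(\partial_x u + u\partial_x u)$ played for BBM. Writing $F(u) = \partial_x(1-\partial_x^2)^{-1}\big(u^2 + \tfrac12(\partial_x u)^2\big)$, the equation is $\partial_t u = -u\partial_x u - F(u)$. First I would evaluate this identity at $t=t_0$ on the interval $[a,b]$ where $u(x,t_0)=0$: there $\partial_x u(x,t_0)=0$ as well (since $u$ vanishes on an interval, hence is $C^1$ there and its derivative vanishes), so $u^2 + \tfrac12(\partial_x u)^2$ vanishes on $[a,b]$ too. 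Consequently, for $x\in[a,b]$,
\be
\partial_t u(x,t_0) = -F(u)(x,t_0) = -\partial_x(1-\partial_x^2)^{-1}g(x,t_0),
\ee
where $g = u^2 + \tfrac12(\partial_x u)^2 \geq 0$ is supported in $[a,b]^c$. Using the explicit kernel $(1-\partial_x^2)^{-1}h = \tfrac12 e^{-|\cdot|}\ast h$, one computes $\partial_t u(x,t_0) = -\tfrac12\partial_x\!\int e^{-|x-y|}g(y,t_0)\,dy$, and for $x$ in the interior of $[a,b]$ — where $x-y$ has a fixed sign relative to the support of $g$ — this expression is monotone in $x$: the contribution of the mass of $g$ to the left of $a$ decays, and that to the right of $b$ grows, as $x$ increases across $[a,b]$. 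A short computation with the $e^{-|x-y|}$ kernel then shows
\be
\partial_t u(b,t_0) - \partial_t u(a,t_0) = -\tfrac12\,(1-e^{a-b})\!\int e^{-(b-y)}g(y,t_0)\,dy - \tfrac12\,(1-e^{a-b})\!\int e^{-(y-a)}g(y,t_0)\,dy \leq 0,
\ee
with the two integrals taken over $y>b$ and $y<a$ respectively (here $a<b$ so $1 - e^{a-b}>0$).

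Now I invoke hypothesis \eqref{cond2-ch}: $\partial_t u(\cdot,t_0)$ is not strictly decreasing on $[a,b]$, so there exist $a\le \alpha<\beta\le b$ with $\partial_t u(\beta,t_0)\ge \partial_t u(\alpha,t_0)$. Applying the identity above on the subinterval $[\alpha,\beta]$ (where $u$ and $\partial_x u$ still vanish, so $g$ is still supported in $[\alpha,\beta]^c$, indeed even in the original $[a,b]^c$) gives $\partial_t u(\beta,t_0) - \partial_t u(\alpha,t_0) \le 0$ — and, reading the sign computation carefully, strict inequality unless $g(\cdot,t_0)$ has no mass to the left of $\alpha$ and none to the right of $\beta$. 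Combined with $\partial_t u(\beta,t_0)\ge \partial_t u(\alpha,t_0)$, this forces $g(y,t_0)=0$ for a.e.\ $y<\alpha$ and a.e.\ $y>\beta$, hence $g(\cdot,t_0)\equiv 0$ on all of $\R$ (it already vanishes on $[\alpha,\beta]$). Therefore $u(\cdot,t_0)^2 + \tfrac12(\partial_x u(\cdot,t_0))^2 \equiv 0$, i.e.\ $u(x,t_0)\equiv 0$ on $\R$.

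Once $u(\cdot,t_0)\equiv 0$ is known, uniqueness of solutions in the class of Theorem \ref{thm1-ch} — run forward and backward in time from $t_0$, using that $u\equiv 0$ is itself a solution — yields $u\equiv 0$ on $\R\times(-T,T)$, which is the claim. (For the finite endpoints $\pm T$ one notes $u\in C([-T,T]:H^1(\R))$.) The main obstacle is the sign analysis in the second paragraph: one must handle the $e^{-|x-y|}$ kernel carefully to confirm that moving $x$ across $[\alpha,\beta]$ makes $\partial_t u$ strictly monotone unless the relevant one-sided masses of $g$ vanish, and then argue that these vanishing masses propagate to give $g\equiv 0$ globally rather than merely locally — this is exactly where the nonlocality of the CH nonlinearity is used, and it is the step where the precise form of the kernel, as opposed to general elliptic-regularity heuristics, matters. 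A secondary point requiring care is the regularity bookkeeping: $u(\cdot,t_0)\in H^1\cap W^{1,\infty}$ only, so $\partial_t u(\cdot,t_0) = -u\partial_x u - F(u)$ must be interpreted in, say, $L^2$, and the pointwise values $\partial_t u(a,t_0)$, $\partial_t u(b,t_0)$ make sense because $F(u)(\cdot,t_0)$, being $\partial_x(1-\partial_x^2)^{-1}$ of an $L^1$ function, is continuous, and $u\partial_x u = \tfrac12\partial_x(u^2)$ with $u^2\in W^{1,1}$ is continuous as well.
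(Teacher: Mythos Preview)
Your argument is correct and follows exactly the route the paper intends: the paper's own proof simply says ``It follows an argument similar to that provided in detail in the proof of Theorem~\ref{THA2}'', and you have carried out precisely that adaptation, using the nonnegativity of $g=u^2+\tfrac12(\partial_xu)^2$ together with the explicit kernel $\partial_xG_2(x)=-\tfrac12\operatorname{sgn}(x)e^{-|x|}$ to force $g(\cdot,t_0)\equiv0$ and then invoking uniqueness. One small slip: in your displayed difference formula the exponents are swapped --- for $y>b$ the kernel should be $e^{-(y-b)}$ and for $y<a$ it should be $e^{-(a-y)}$ (otherwise the integrals need not converge for $g\in L^1$) --- but the sign conclusion and the rest of the argument are unaffected.
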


One observes that from hypothesis \eqref{cond1-ch} it follows that $\partial_tu(\cdot,t_0)$ is continuous in $[a,b]$ such that the pointwise evaluation in \eqref{cond2-ch} makes sense. 

We remark that the result in Theorem \ref{IVPCH2} expands to all the b-equations (see \cite{LiPo}) for both their IVP and their IPBVP.

A main tool in the proof of the above unique continuation results is a refinement in the application of a lemma established in \cite{LiPo}, (see Lemma \ref{key-uc} below).

Next, we shall examine some special regularity properties of solutions to the BBM equation.

In \cite{BoTz} and \cite{Wa} the integral form of the equation written in \eqref{bbm1} was considered. Here, we shall work with the following equivalent integral version (for the IVP and the IPBVP) of \eqref{bbm} :
\be\label{bbm2} 
u(x,t)=u_0(x)-\int_0^t \partial_xJ^{-2} \big(u+\frac{u^2}{2}\big)(x,t')dt'.
\ee

To simplify our exposition in the case of the IVP we shall restrict ourselves to consider only solutions provided by Theorem \ref{BoTz} and Theorem \ref{THA1}. It will be clear that the results here  extend to the $L^p_s(\mathbb R)$-solutions obtained in \cite{Wa} .

In this direction we have:

\begin{thm}\label{TH1}

(i) Let $u_0\in H^{s}(\mathbb R),\;s\geq 0$. Let $$u\in C([-T,T]:H^s(\mathbb R))$$ be the  corresponding solution of the IVP \eqref{bbm}  provided by Theorem \ref{BoTz}. Then
\be
 \label{m5}
u-u_0\in C^{\infty}([-T,T]:H^{(2s+1/2)^-}(\R)),\;\;\;\text{if}\;\;\;s\in [0,1/2],
\ee
and
\be
 \label{m6}
u-u_0\in C^{\infty}([-T,T]:H^{s+1}(\R)),\;\;\;\;\text{if}\;\;\;\;s>1/2.
\ee

(ii)  Let $u_0\in C^{k,\theta}(\mathbb R),\;k\in\mathbb Z,\,k\geq 0,\;\theta\in[0,1]$. Let $$u\in C([-T,T]: C^{k,\theta}(\mathbb R))$$ be the  corresponding solution of the IVP \eqref{bbm}  provided by Theorem \ref{THA1}. Then
\be
 \label{m5a}
u-u_0\in C^{\infty}([-T,T]: C^{k+1,\theta}(\mathbb R)).
\ee

\end{thm}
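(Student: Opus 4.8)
The plan is to prove Theorem \ref{TH1} directly from the integral formulation \eqref{bbm2}, exploiting the smoothing built into $\partial_x J^{-2}$ and a bootstrap in the time variable. Write $w(x,t) := u(x,t) - u_0(x) = -\int_0^t \partial_x J^{-2}\big(u + \frac{u^2}{2}\big)(x,t')\,dt'$. The operator $\partial_x J^{-2}$ has Fourier symbol $2\pi i\xi(1+4\pi^2\xi^2)^{-1}$, which is bounded and decays like $|\xi|^{-1}$; so it maps $H^\sigma \to H^{\sigma+1}$, and it also maps $L^\infty \to W^{1,\infty}$ (indeed $C^{k,\theta}\to C^{k+1,\theta}$, since its kernel $\tfrac12\,\mathrm{sgn}(x)e^{-|x|}$ is in $L^1$ with derivative a finite measure, hence convolution against it gains one Lipschitz/Hölder derivative). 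The whole proof is an exercise in combining these mapping properties with the algebra/product estimates for $u^2$ in the relevant spaces.

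For part (i): first I would record that, by Theorem \ref{BoTz}, $u \in C([-T,T]:H^s)$, hence $u^2 \in C([-T,T]:H^{\sigma})$ for a suitable $\sigma$. When $s > 1/2$, $H^s$ is an algebra, so $u + u^2/2 \in C([-T,T]:H^s)$ and therefore $\partial_x J^{-2}(u+u^2/2) \in C([-T,T]:H^{s+1})$; integrating in $t$ shows $w \in C^1([-T,T]:H^{s+1})$. Then $\partial_t w = -\partial_x J^{-2}(u + u^2/2)$ and $\partial_t u = \partial_t w$, so one differentiates the equation repeatedly: each time differentiation of $u^2$ produces $u\,\partial_t u$, which stays in $H^s$ by the algebra property since $\partial_t u \in C([-T,T]:H^s)$, and $\partial_x J^{-2}$ returns us to $H^{s+1}$. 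This gives $w \in C^{\infty}([-T,T]:H^{s+1})$, proving \eqref{m6}. When $s \in [0,1/2]$, the only change is the product estimate: for $u \in H^s$ with $s \le 1/2$ one has $u^2 \in H^{(2s-1/2)^-}$ (this is the standard Sobolev multiplication $H^{s}\cdot H^{s}\hookrightarrow H^{2s-1/2}$ for $0\le s<1/2$, and $H^{1/2}\cdot H^{1/2}\hookrightarrow H^{0^-}$; the borderline exponents cost an $\epsilon$), so $u + u^2/2 \in H^{(2s-1/2)^-}$ after taking the worse of the two summands — note $s \ge 2s - 1/2$ here — hence $\partial_x J^{-2}(u+u^2/2) \in H^{(2s+1/2)^-}$ and $w \in C^1([-T,T]:H^{(2s+1/2)^-})$. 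For the higher time derivatives one checks the regularity does not degrade further: $\partial_t u = \partial_t w \in C([-T,T]:H^{(2s+1/2)^-})$ which has Sobolev index $\ge s$ (since $(2s+1/2)^- \ge s$ for $s\le 1/2$), so $u\,\partial_t u \in H^{(2s-1/2)^-}$ by the same multiplication estimate (using that the rougher factor $u\in H^s$ controls the product), and applying $\partial_x J^{-2}$ returns $H^{(2s+1/2)^-}$; iterating gives \eqref{m5}.

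For part (ii): the argument is the cleaner one because $C^{k,\theta}(\mathbb R)$ with the norm \eqref{holder-norm} is a Banach algebra for every $k\ge 0$, $\theta\in[0,1]$ (products of bounded functions with bounded, Hölder $k$-th derivatives again have that property, by Leibniz and the standard Hölder product rule). By Theorem \ref{THA1}, $u \in C^\infty((-T,T):C^{k,\theta})$, so $u + u^2/2 \in C([-T,T]:C^{k,\theta})$, and I would establish the key mapping lemma that $\partial_x J^{-2} : C^{k,\theta}(\mathbb R) \to C^{k+1,\theta}(\mathbb R)$ is bounded — again via the explicit kernel $\tfrac12\operatorname{sgn}(x)e^{-|x|}$, whose convolution is smoothing of order one and preserves Hölder moduli. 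Then $w = -\int_0^t \partial_x J^{-2}(u+u^2/2)\,dt' \in C^1([-T,T]:C^{k+1,\theta})$, and differentiating in $t$ as in part (i) — $\partial_t u = \partial_t w$, each time differentiation hits $u^2$ giving $u\,\partial_t u \in C^{k,\theta}$ by the algebra property — yields $w \in C^\infty([-T,T]:C^{k+1,\theta})$, which is \eqref{m5a}.

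The main obstacle I anticipate is not the smoothing operator (its mapping properties are routine once one writes the kernel) but the low-regularity product estimates in part (i) for $s\in[0,1/2]$: one must track carefully that $H^s\cdot H^s\hookrightarrow H^{2s-1/2}$ with the $\epsilon$-loss at the endpoints $s=0$ (where $L^2\cdot L^2 = L^1 \not\hookrightarrow H^{-1/2}$, only into $H^{-1/2-}$ via $L^1\hookrightarrow H^{-1/2-\epsilon}$ using $\widehat{L^1}\subset L^\infty$) and $s=1/2$, and then verify the self-consistency of the bootstrap, namely that the gained space $H^{(2s+1/2)^-}$ has index at least $s$ so that feeding $\partial_t u$ back into the product does not force a further loss — this is exactly why the exponent in \eqref{m5} saturates at $(2s+1/2)^-$ rather than improving with each iteration. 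I would isolate this multiplication inequality as a short preliminary lemma before running the bootstrap.
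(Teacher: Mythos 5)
Your proposal is correct and follows the same overall skeleton as the paper: work from the integral identity $u(t)-u_0=-\int_0^t\partial_xJ^{-2}(u+u^2/2)\,dt'$, use the one-derivative smoothing of $\partial_xJ^{-2}$, control the quadratic term by a product estimate, and then bootstrap in $t$ by differentiating the equation ($\partial_t^2u=\phi(D_x)\big(\phi(D_x)(u+u^2/2)+u\,\phi(D_x)(u+u^2/2)\big)$, exactly as in the paper). The one place where you take a genuinely different technical route is the low-regularity case $s\in[0,1/2]$ of part (i): you put $u^2$ into the negative-index space $H^{(2s-1/2)^-}(\R)$ via the bilinear Sobolev multiplication $H^s\cdot H^s\hookrightarrow H^{(2s-1/2)^-}$ (with the $L^2\cdot L^2=L^1\hookrightarrow H^{(-1/2)^-}$ endpoint) and then apply the $L^2$-multiplier bound $\partial_xJ^{-2}:H^\sigma\to H^{\sigma+1}$, which is valid for all $\sigma\in\R$ by Plancherel. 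The paper instead stays in positive-order, $L^q$-based Sobolev spaces with $q<2$: it estimates $\|J^{s_2}(u^2)\|_q$ by H\"older, the fractional Leibniz rule (Lemma \ref{lem1}) and Sobolev embedding, and then invokes the full strength of Lemma \ref{ineq1}, namely that $\phi(D_x)$ maps $L^q$ into $H^{\eta}$ with $\eta<3/2-1/q$ because the kernel $\partial_xG_{2-\eta}$ lies in the dual Young exponent. Both routes yield the same exponent $(2s+1/2)^-$; yours is arguably shorter if one is willing to quote the standard bilinear estimate in negative Sobolev spaces, while the paper's avoids negative indices entirely at the cost of juggling the exponents $q,r,l,s_2$. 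Your consistency check for the iteration (that $(2s+1/2)^-\geq s$ so feeding $\partial_tu$ back into the product loses nothing further) and your treatment of part (ii) via the algebra property of $C^{k,\theta}$ and the kernel $-\tfrac12\,\mathrm{sgn}(x)e^{-|x|}$ (equivalently the identity $\partial_x\phi(D_x)=J^{-2}-I$ used in the paper) match the paper's argument.
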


\begin{thm}\label{TH1b} The results in Theorem \ref{TH1} part (i) holds for solutions of the IPBVP provided by Theorem \ref{RoZh}.

\end{thm}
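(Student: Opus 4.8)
The plan is to transcribe, essentially verbatim, the argument proving Theorem~\ref{TH1}(i), after checking that each analytic ingredient it uses has an identical (or easier) counterpart on the circle. The periodic solution $u\in C([-T,T]:H^s(\mathbb S))$ provided by Theorem~\ref{RoZh} satisfies the same integral identity \eqref{bbm2}, namely
\[
u(x,t)-u_0(x)=-\int_0^t \partial_xJ^{-2}\Big(u+\frac{u^2}{2}\Big)(x,t')\,dt',
\]
since \eqref{bbm2} is obtained from \eqref{bbm} purely by algebraic manipulation together with the Fourier representations \eqref{group}--\eqref{bessel}, which on $\mathbb S$ merely become the corresponding Fourier multipliers on $\mathbb Z$. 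In particular $\partial_xJ^{-2}$ acts on $H^\sigma(\mathbb S)$ as the multiplier $2\pi i k(1+4\pi^2k^2)^{-1}$, $k\in\mathbb Z$, so it maps $H^\sigma(\mathbb S)$ boundedly into $H^{\sigma+1}(\mathbb S)$ for every $\sigma\in\R$, exactly as on $\R$.

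The nonlinearity is handled by the same two product estimates. If $s>1/2$ then $H^s(\mathbb S)$ is a Banach algebra, so $u+u^2/2\in C([-T,T]:H^s(\mathbb S))$; applying $\partial_xJ^{-2}$ and integrating in $t$ yields \eqref{m6} for the periodic solution. If $s\in[0,1/2]$ one uses instead the bilinear Sobolev estimate on the torus
\[
\|fg\|_{H^{(2s-1/2)^-}(\mathbb S)}\lesssim \|f\|_{H^s(\mathbb S)}\,\|g\|_{H^s(\mathbb S)},\qquad 0\le s\le 1/2,
\]
whose proof is the same Littlewood--Paley/Cauchy--Schwarz argument used on $\R$, if anything simplified by the discreteness of the spectrum. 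Since $(2s-1/2)^-\le s$ in this range, $u+u^2/2\in C([-T,T]:H^{(2s-1/2)^-}(\mathbb S))$, and the one-derivative smoothing of $\partial_xJ^{-2}$ together with the time integration gives \eqref{m5}.

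To pass from $C$ to $C^\infty$ in time one differentiates the identity above: $\partial_t u=-\partial_xJ^{-2}(u+u^2/2)$, and more generally $\partial_t^{j+1}u=-\partial_xJ^{-2}\,\partial_t^{j}\big(u+u^2/2\big)$, where $\partial_t^{j}(u+u^2/2)$ is a fixed polynomial in $u,\partial_tu,\dots,\partial_t^ju$. Arguing by induction, and splitting $u=u_0+(u-u_0)$ wherever necessary, each $\partial_t^ju$ with $j\ge1$ belongs to the same regularity class as $u-u_0$; the product estimates above, followed by the gain of $\partial_xJ^{-2}$, show that the right-hand side is continuous in $t$ with values in that class, so $\partial_t^j(u-u_0)$ is $C^1$ in $t$ there, which closes the induction. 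A single choice of $\epsilon$ suffices throughout the finitely many steps needed to reach any prescribed $C^N([-T,T]:H^{s'}(\mathbb S))$, because at every step the bilinear estimate is applied to factors already at least as smooth as the target, so the half-derivative losses do not accumulate.

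The one point demanding genuine care is the bookkeeping in the range $s\in[0,1/2]$: there $u$ itself gains no regularity, only $u-u_0$ does, so in expanding $\partial_t^j(u+u^2/2)$ one must consistently write $u=u_0+(u-u_0)$ and verify that every resulting product of two factors --- each of regularity at least $s$ --- lands in $H^{(2s-1/2)^-}(\mathbb S)$, and that the one-derivative gain from $\partial_xJ^{-2}$ exactly compensates the half-derivative loss of the product (since $(2s-1/2)+1=2s+1/2$). Once this is organized the remainder is a direct transcription of the $\R$ proof, as no dispersive or other smoothing beyond the elementary mapping properties of $\partial_xJ^{-2}$ and of pointwise multiplication on $H^s$ enters, and all of these hold on $\mathbb S$ verbatim.
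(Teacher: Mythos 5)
Your proposal is correct and follows essentially the same route as the paper, which simply observes that the proof of Theorem \ref{TH1}(i) transfers verbatim to the circle; your transcription correctly identifies the only ingredients that need checking (the one-derivative gain of $\partial_xJ^{-2}$ as a Fourier multiplier on $\mathbb Z$, the product estimates, and the non-accumulation of losses in the time-derivative bootstrap). The only cosmetic difference is that you invoke the bilinear Sobolev bound $H^s\times H^s\to H^{(2s-1/2)^-}$ directly where the paper routes the low-regularity case through the Kato--Ponce inequality of Lemma \ref{lem1} together with \eqref{key1} and Sobolev embedding, but these are equivalent in substance.
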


\begin{rem}

\begin{enumerate}

\item Concerning the time regularity,  our results  show that the given solution $u\in C(\mathbb R:X)$  belongs to $C^{\infty}(\mathbb R:X)$.

In particular, for the explicit solutions described in \eqref{ex1} and \eqref{zz1}, one has that  $u\in C(\mathbb R:H^2(\mathcal T)), \,s\in[0,1/2)$, for $\mathcal T=\R$ or $\mathbb S$, with $\partial_tu(x,t)=u(x,t)-u_0(x)\equiv 0,\;(x,t)\in  \mathcal T\times \R$.

\item The arguments below also show that the group $\{U(t)\,:\,t\in\mathbb R\}$ defined in \eqref{group} satisfies: if $u_0\in H^s(\mathbb R)$, then 
$$
U(t)u_0-u_0\in C^{\infty}(\mathbb R: H^{s+1}(\mathbb R)).
$$
In the case $u_0\in C^{k,\theta}(\mathbb R)$, one has 
$$
\partial_x(U(t)u_0-u_0)\in C^{\infty}(\mathbb R:C^{k+1,\theta}(\mathbb R)).
$$

\item In Theorem \ref{BoTz} and Theorem \ref{RoZh}  one can use the continuous dependence of the solution upon the data  to see that the solutions of \eqref{bbm-c} are the  limit of classical solutions of the  equations \eqref{bbm} and \eqref{bbm-c}. In the case $k\in\mathbb Z, k\geq 0$ with  $\theta=0$ the solutions in
 Theorem  \ref{THA1}   satisfy the same property. However, this does not hold when $k=0,1,2$ and  $\theta\in(0,1]$, since  in this setting  the intitial data is not the limit of smooth functions.

\end{enumerate}
\end{rem}

Next, we recall the definition of $H^s(\Omega)$, $s\geq 0$, and $\Omega\subset \mathbb R$ open set. Given $f:\Omega\to\mathbb R$ one says that $f\in H^s(\Omega)$ if there exists $F\in H^s(\mathbb R)$ such that $F\big|_{\Omega}=f$. The norm of $f$ is the infimum of the norm in $H^s(\mathbb R)$ among  all the possible extensions. Similarly for 
$C^{k,\theta}(\Omega)$.

\begin{thm}\label{TH1c} 

Let $u_0\in H^s(\mathbb R), \,s\geq 0,$ and $\Omega\subseteq \mathbb R$ be an open set.

(a) If $s\leq 1/2$ and $u_0\in H^{s+\eta}(\Omega)$, $\eta\in(0,s+1/2)$, then the corresponding solution $u$ of the IVP \eqref{bbm} provided by Theorem \ref{BoTz} satisfies:
\be
\label{z1}
u\big|_{\Omega}\in C(\mathbb R:H^{s+\eta}(\Omega)).
\ee

(b) If $s\leq 1/2$ and $u_0\notin H^{s+\eta}(\Omega)$, $\eta\in(0,s+1/2)$, then the corresponding solution $u$ of the IVP \eqref{bbm} provided by Theorem \ref{BoTz}  satisfies:
\be
\label{z2}
u\big|_{\Omega}\notin H^{s+\eta}(\Omega),\;\;\;\;\forall\,t\in \,\mathbb R.
\ee

(c) If $s>1/2$ and $u_0\in H^{s+\eta}(\Omega)$, $\eta\in(0,1]$, then the corresponding solution $u$ of the IVP \eqref{bbm}  provided by Theorem \ref{BoTz} satisfies:
\be
\label{z3}
u\big|_{\Omega}\in C(\mathbb R:H^{s+\eta}(\Omega)).
\ee

(d) If $s>1/2$ and $u_0\notin H^{s+\eta}(\Omega)$, $\eta\in(0,1]$, then the corresponding solution $u$ of the IVP \eqref{bbm} provided by Theorem \ref{BoTz} satisfies:
\be
\label{z4}
u\big|_{\Omega}\notin H^{s+\eta}(\Omega),\;\;\;\;\forall\,t\in \,\mathbb R.
\ee

\end{thm}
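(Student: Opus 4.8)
The plan is to reduce everything to the smoothing statement in Theorem \ref{TH1}, which already tells us that $u - u_0 \in C^\infty([-T,T]:H^{(2s+1/2)^-}(\R))$ when $s \le 1/2$ and $u-u_0 \in C^\infty([-T,T]:H^{s+1}(\R))$ when $s > 1/2$, so that $u-u_0$ is always strictly smoother than $u_0$ (globally on $\R$, and hence on $\Omega$ by restriction). Since $u = u_0 + (u - u_0)$ on $\Omega$, the local $H^{s+\eta}(\Omega)$-regularity of $u$ is governed entirely by that of $u_0$: the perturbation $u-u_0$ never affects it, provided the threshold $s+\eta$ stays below the gain guaranteed by Theorem \ref{TH1}. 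For part (a), $\eta \in (0, s+1/2)$ gives $s+\eta < 2s+1/2$, so $u-u_0\big|_\Omega \in H^{s+\eta}(\Omega)$, and adding $u_0\big|_\Omega \in H^{s+\eta}(\Omega)$ yields \eqref{z1}; continuity in $t$ is inherited from the $C^\infty$-in-time statement for $u-u_0$ together with the (time-independent) membership of $u_0\big|_\Omega$. Part (c) is identical with the threshold $\eta \in (0,1]$, using $s+\eta \le s+1 $, which is exactly the gain in \eqref{m6}.

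For the negative statements (b) and (d), I would argue by contradiction. Suppose, for some $t \in \R$, that $u(t)\big|_\Omega \in H^{s+\eta}(\Omega)$. Then $u_0\big|_\Omega = u(t)\big|_\Omega - (u(t)-u_0)\big|_\Omega$. The first term is in $H^{s+\eta}(\Omega)$ by assumption; the second term is in $H^{s+\eta}(\Omega)$ because $u(t)-u_0 \in H^{2s+1/2 - }(\R)$ (case $s\le 1/2$) or $H^{s+1}(\R)$ (case $s>1/2$) and $s+\eta$ is below that regularity, so its restriction to $\Omega$ lies in $H^{s+\eta}(\Omega)$. Hence $u_0\big|_\Omega \in H^{s+\eta}(\Omega)$, contradicting the hypothesis $u_0 \notin H^{s+\eta}(\Omega)$. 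This gives \eqref{z2} and \eqref{z4} for all $t$.

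The only genuinely delicate point is the endpoint bookkeeping in the case $s \le 1/2$, where Theorem \ref{TH1} only gives $u-u_0 \in H^{(2s+1/2)^-}(\R)$, i.e. membership in $H^{\sigma}(\R)$ for every $\sigma < 2s+1/2$ but not necessarily at $\sigma = 2s+1/2$ itself. Since the hypothesis in (a)/(b) restricts to $\eta \in (0, s+1/2)$, one has $s+\eta < 2s+1/2$ strictly, so one may pick $\sigma$ with $s+\eta \le \sigma < 2s+1/2$ and run the argument with that $\sigma$; no endpoint issue actually arises. I would also note the elementary fact that restriction $H^{\sigma}(\R) \to H^{s+\eta}(\Omega)$ is bounded whenever $\sigma \ge s+\eta$ (by definition of $H^{s+\eta}(\Omega)$ via extensions), and that $H^{s+\eta}(\Omega)$ is a vector space, which is all that is needed to add and subtract the two pieces. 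Thus the main obstacle is essentially notational rather than mathematical — all the analytic content is already in Theorem \ref{TH1}.
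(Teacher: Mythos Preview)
Your proposal is correct and follows precisely the paper's own argument: decompose $u = u_0 + (u-u_0)$ and invoke Theorem \ref{TH1} so that $u-u_0 \in H^{s'}(\R)$ with $s' \ge s+\eta$, whence the local $H^{s+\eta}(\Omega)$-regularity of $u(\cdot,t)$ is exactly that of $u_0$. Your write-up is in fact more detailed than the paper's (which is a three-line sketch), and your care with the strict inequality $s+\eta < 2s+1/2$ in the $s\le 1/2$ case is exactly what is needed to avoid the endpoint loss in \eqref{m5}.
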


\begin{thm}\label{TH1d} 

Let $u_0\in C^{k,\theta}(\mathbb R), \,k\in\mathbb Z, k\geq 0,\;\theta\in[0,1]$ and $\Omega\subseteq \mathbb R$ be an open set.

(a) If $u_0\in C^{j,\eta}(\Omega)$, $j\in\{k;k+1\}$, $\eta\in[0,1]$, and $j+\eta\leq k+1+\theta$, then the corresponding solution $u$ of the IVP \eqref{bbm} provided by Theorem \ref{THA1} satisfies:
\be
\label{z5}
u\big|_{\Omega}\in C^{\infty}(\mathbb R:C^{j,\eta}(\Omega)).
\ee

(b)  If $u_0\notin C^{j,\eta}(\Omega)$, $j\in\{k;k+1\}$, $\eta\in[0,1]$, and $j+\eta\leq k+1+\theta$, then the corresponding solution $u$ of the IVP \eqref{bbm} provided by Theorem \ref{THA1}  satisfies:
\be
\label{z6}
u\big|_{\Omega}\notin C(\mathbb R:C^{j,\eta}(\Omega)),\;\;\;\;\forall\,t\in [-T,T].
\ee

\end{thm}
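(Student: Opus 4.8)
The plan is to exploit the integral identity \eqref{bbm2}, which at the level of the local H\"older solution from Theorem \ref{THA1} can be differentiated in time to give
$\partial_t u(x,t) = -\partial_x J^{-2}\big(u+\tfrac{u^2}{2}\big)(x,t)$, hence
\be
\label{diff-id-plan}
u(x,t)-u_0(x) = -\int_0^t \partial_x J^{-2}\Big(u+\frac{u^2}{2}\Big)(x,t')\,dt'.
\ee
The key structural fact, already contained in Theorem \ref{TH1}(ii), is that the integrand gains one derivative: $\partial_x J^{-2}$ maps $L^\infty$ (indeed $C^{k,\theta}$) to $C^{k+1,\theta}$, because $J^{-2}$ is convolution with a kernel whose first derivative is integrable and H\"older-regular, and $u+u^2/2$ lives in $C^{k,\theta}(\R)$ for all $t\in[-T,T]$ by the algebra property of $C^{k,\theta}$ under the persistence class $X_T^{k,\theta}$. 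Thus $u-u_0 \in C^\infty([-T,T]:C^{k+1,\theta}(\R))$ \emph{globally in space}, and this is the gained-regularity backbone that makes the \emph{local} propagation in part (a) work: the only obstruction to $u|_\Omega$ being smoother than $C^{k,\theta}$ is the obstruction already present in $u_0|_\Omega$.

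For part (a), suppose $u_0\in C^{j,\eta}(\Omega)$ with $j\in\{k,k+1\}$, $\eta\in[0,1]$, and $j+\eta\le k+1+\theta$. By definition of $C^{j,\eta}(\Omega)$ there is an extension $F\in C^{j,\eta}(\R)$ with $F|_\Omega=u_0|_\Omega$. The first step is to check that the nonlinear correction term in \eqref{diff-id-plan} is in $C^{k+2,\theta}(\R)\hookrightarrow C^{j,\eta}(\R)$ for the given range of $(j,\eta)$: the integrand is in $C^{k,\theta}(\R)$, so $\partial_x J^{-2}$ of it is in $C^{k+1,\theta}(\R)$, and one can squeeze out one more derivative from the smoothing kernel to land in $C^{k+1+\theta}$-type regularity, which dominates $j+\eta$ by hypothesis; since this gain is global it restricts to $\Omega$. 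Then $u(\cdot,t)\big|_\Omega = u_0\big|_\Omega + (\text{term in }C^{j,\eta}(\Omega))$, so $u(\cdot,t)\big|_\Omega\in C^{j,\eta}(\Omega)$, with the time-smoothness $C^\infty$ inherited from the fact that \eqref{diff-id-plan} can be differentiated in $t$ arbitrarily often (each $\partial_t$ produces another factor with the same smoothing structure). This yields \eqref{z5}. The subtle point to be careful about is the borderline case $j+\eta = k+1+\theta$: here I would argue that $\partial_x J^{-2}(u+u^2/2)$ actually lies in $C^{k+1,\theta}(\R)$, which is exactly $C^{k+1+\theta}$ in the scale and therefore contains $C^{j,\eta}$ as needed, using the embedding $C^{k+1,\theta}(\R)\subset C^{j,\eta}(\R)$ when $j+\eta\le k+1+\theta$.

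For part (b), this is the contrapositive/rigidity statement: if $u_0\notin C^{j,\eta}(\Omega)$ then $u(\cdot,t)|_\Omega\notin C^{j,\eta}(\Omega)$ for every $t$, and in particular $t\mapsto u(\cdot,t)|_\Omega$ cannot even be a continuous $C^{j,\eta}(\Omega)$-valued curve. Here I would run the decomposition the other way: from \eqref{diff-id-plan}, $u_0\big|_\Omega = u(\cdot,t)\big|_\Omega + \int_0^t \partial_x J^{-2}(u+u^2/2)\,dt'\big|_\Omega$; the integral term is in $C^{k+1,\theta}(\Omega)\supseteq C^{j,\eta}(\Omega)$ by the same smoothing estimate and the constraint $j+\eta\le k+1+\theta$, so if $u(\cdot,t)\big|_\Omega$ were in $C^{j,\eta}(\Omega)$ we would get $u_0\big|_\Omega\in C^{j,\eta}(\Omega)$, a contradiction. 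This gives \eqref{z6} pointwise in $t$, and the failure of continuity of the curve follows since the value at each $t$ already lies outside $C^{j,\eta}(\Omega)$.

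The main obstacle I anticipate is making the one-derivative smoothing estimate for $\partial_x J^{-2}$ precise at the endpoint of the H\"older scale, i.e. verifying cleanly that $\partial_x J^{-2}\colon C^{k,\theta}(\R)\to C^{k+1,\theta}(\R)$ with the correct handling of $\theta\in\{0,1\}$ (where $C^{k,0}=W^{k,\infty}$ and $C^{k,1}$ is Lipschitz), and correspondingly pinning down which pairs $(j,\eta)$ with $j+\eta\le k+1+\theta$ are actually covered — in particular that the case $j=k+1,\ \eta\le\theta$ and the case $j=k,\ \eta\le 1$ are both absorbed by $C^{k+1,\theta}(\R)$ via the elementary inclusions among H\"older spaces on $\R$. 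Once that mapping property and the inclusion bookkeeping are nailed down, parts (a) and (b) are immediate from the identity \eqref{diff-id-plan} and its time-differentiated versions, exactly paralleling the global statement in Theorem \ref{TH1}(ii).
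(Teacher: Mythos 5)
Your proposal is correct and follows essentially the same route as the paper: the paper proves Theorem \ref{TH1d} exactly as it proves Theorem \ref{TH1c}, namely by writing $u(\cdot,t)=\bigl(u(\cdot,t)-u_0\bigr)+u_0$ and invoking the global gain $u-u_0\in C^{\infty}([-T,T]:C^{k+1,\theta}(\mathbb R))$ from Theorem \ref{TH1}(ii) together with the embedding $C^{k+1,\theta}\subset C^{j,\eta}$ for $j+\eta\leq k+1+\theta$, which is precisely your two-way decomposition for parts (a) and (b). The only blemish is your passing claim that the correction term lies in $C^{k+2,\theta}(\mathbb R)$ --- the operator $\partial_xJ^{-2}$ gains exactly one derivative, so the correct space is $C^{k+1,\theta}(\mathbb R)$, which is what your argument actually uses and all that the hypothesis $j+\eta\leq k+1+\theta$ requires.
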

Next, we introduce the notation :  given $f:\mathbb R \to\mathbb R$
for $k\in \mathbb Z, k\geq 0$ and  $\theta\in[0,1]$ define
\be\label{sin-set}
\Upsilon_{k,\theta}(f)=\{x\in\mathbb R\;:\;f\;\;\text{is not}\;\;C^{k,\theta}\;\;\text{at}\;\;x\,\}.
\ee

\begin{cor}\label{cons}
 If $u_0\in C^{k,\theta}(\mathbb R)$,  $k\in\mathbb Z,\,k\geq 0,$ and $\theta\in[0,1]$, then the corresponding  solution  $u\in C([-T,T]: C^{k,\theta}(\mathbb R))$ of the IVP \eqref{bbm} obtained  in Theorem \ref{THA1} 
 satisfies for all $t\in[-T,T]$
\be
\label{regA1}
\Upsilon_{j,\eta}(u(t))=\Upsilon_{j,\eta}(u_0),\,j\in\{k;k+1\},\,\eta\in[0,1],\;j+\eta\leq k+1+\theta.
\ee

\end{cor}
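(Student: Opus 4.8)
The plan is to deduce Corollary~\ref{cons} directly from Theorem~\ref{TH1d}, treating the two inclusions $\Upsilon_{j,\eta}(u(t))\subseteq\Upsilon_{j,\eta}(u_0)$ and $\Upsilon_{j,\eta}(u_0)\subseteq\Upsilon_{j,\eta}(u(t))$ separately, and exploiting the fact that both $C^{k,\theta}$-regularity and its failure are \emph{local} properties testable on arbitrarily small open sets. Fix $t\in[-T,T]$, fix $j\in\{k;k+1\}$ and $\eta\in[0,1]$ with $j+\eta\le k+1+\theta$, and abbreviate $\Upsilon=\Upsilon_{j,\eta}$.

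First I would prove $\Upsilon(u(t))\subseteq\Upsilon(u_0)$, equivalently the complementary inclusion: if $u_0$ is $C^{j,\eta}$ at a point $x_0$, then so is $u(t)$. Being $C^{j,\eta}$ at $x_0$ means there is an open interval $\Omega\ni x_0$ with $u_0\in C^{j,\eta}(\Omega)$ (in the extension sense recalled before Theorem~\ref{TH1c}). Since $j+\eta\le k+1+\theta$, Theorem~\ref{TH1d}(a) applies on this $\Omega$ and yields $u\big|_\Omega\in C^\infty([-T,T]:C^{j,\eta}(\Omega))$; in particular $u(t)\big|_\Omega\in C^{j,\eta}(\Omega)$, so $u(t)$ is $C^{j,\eta}$ at $x_0$. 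Hence $x_0\notin\Upsilon(u(t))$, proving this inclusion.

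For the reverse inclusion $\Upsilon(u_0)\subseteq\Upsilon(u(t))$ I would argue by contraposition: suppose $u(t)$ is $C^{j,\eta}$ at $x_0$, so there is an open interval $\Omega\ni x_0$ with $u(t)\big|_\Omega\in C^{j,\eta}(\Omega)$. I claim $u_0\big|_\Omega\in C^{j,\eta}(\Omega)$. Indeed, by Theorem~\ref{THA1} part~(ii) of Theorem~\ref{TH1} gives $u-u_0\in C^\infty([-T,T]:C^{k+1,\theta}(\R))$, hence $u(t)-u_0\in C^{k+1,\theta}(\R)$, and since $j+\eta\le k+1+\theta$ one has $C^{k+1,\theta}(\R)\hookrightarrow C^{j,\eta}(\R)\subseteq C^{j,\eta}(\Omega)$; therefore $u_0\big|_\Omega = u(t)\big|_\Omega - (u(t)-u_0)\big|_\Omega\in C^{j,\eta}(\Omega)$, i.e. $x_0\notin\Upsilon(u_0)$. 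Combining the two inclusions gives \eqref{regA1}. The only point requiring a little care — and the main obstacle, if any — is verifying the embedding $C^{k+1,\theta}\hookrightarrow C^{j,\eta}$ under the stated arithmetic constraint $j+\eta\le k+1+\theta$ with $j\in\{k;k+1\}$; this is the standard inclusion of Hölder spaces on intervals (a $C^{m,\alpha}$ function is $C^{m',\alpha'}$ whenever $m'+\alpha'\le m+\alpha$), and one should note it is exactly this gain of one derivative in Theorem~\ref{TH1}(ii) that makes the argument close, since it is what allows $u(t)$ and $u_0$ to have \emph{identical} $C^{j,\eta}$ singular sets in the full admissible range $j+\eta\le k+1+\theta$. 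Alternatively, one can observe the second inclusion also follows formally from Theorem~\ref{TH1d}(b) applied with $\Omega$ replaced by a slightly smaller interval, but the direct argument via Theorem~\ref{TH1}(ii) is cleaner.
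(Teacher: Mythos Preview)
Your proof is correct and follows essentially the same approach as the paper: the decomposition $u(t)=(u(t)-u_0)+u_0$ together with the gain $u(t)-u_0\in C^{k+1,\theta}(\R)$ from Theorem~\ref{TH1}(ii) and the H\"older embedding $C^{k+1,\theta}\hookrightarrow C^{j,\eta}$ for $j+\eta\le k+1+\theta$. The only minor asymmetry is that you route the first inclusion through Theorem~\ref{TH1d}(a) rather than directly through the same decomposition (which would give both inclusions at once, since adding or subtracting a globally $C^{j,\eta}$ function cannot change the $C^{j,\eta}$-singular set); either way is fine, and your writeup is simply a more explicit version of the paper's one-line sketch.
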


\begin{rem}
\begin{enumerate}
\item
Roughly, the reversibility in time of the equation and the above results tells us  that in the $H^s$ and $C^{k, \theta}$ settings the regularity (and singularities) of $u_0$ are preserved by the solution $u(\cdot,t)$ in its time interval of definition.

\item
The "gain" of regularity   described in Theorem \ref{TH1} and Theorem \ref{TH1b} are quite different to those known in solutions of the IVP associated for the  KdV equation \eqref{KdV}. In this case, as a consequence of Kato smoothing effect \cite{Ka} one has : if $u(x,0)=u_0(x)\in L^2(\mathbb R)$, then the corresponding solution $u=u(x,t)$ of \eqref{KdV} satisfies
$$
u(\cdot,t)\in C^{0,1/2}_{loc}(\mathbb R),\;\;\;\text{a.e.}\;\;\;t\in\mathbb R,
$$
see, \cite{Bo1}, \cite{KPV96}.
\end{enumerate}
\end{rem}

The previous results are complemented by some  found in \cite{LPS}. There,   the propagation of local regularities in $C^{k,\theta}$ of the initial data $u_0\in H^s(\mathbb R)$ to the corresponding solution $u=u(x,t)$ of the IVP \eqref{bbm} was also studied.

Finally, we observe that in \cite{ChHa}, the system of BBM equations
\be
\label{sys-bbm}
\begin{aligned}
\begin{cases}
&\partial_t u +\partial_x u -\partial^3_{xxt}u +\partial_x(Au^2+Buv+Cv^2)=0,\\
&\partial_t v +\partial_x v -\partial^3_{xxt}v +\partial_x(Du^2+Euv+Fv^2)=0,
\end{cases}
\end{aligned}
\ee
with $A,B,C,D,E,F\in\mathbb R$ was considered. Under appropriate algebraic conditions on the coefficients $A,B,C,D,E,F$, it was shown  in \cite{ChHa} that the associated IVP is globally well posed in $H^s(\mathbb R)\times H^s(\mathbb R)$
for any $s\geq 0$. It should be clear from our proofs below that the  results  stated above extend, under appropriate modifications, to the solution of the system \eqref{sys-bbm}.

The rest of this paper is ordered as follows: Section 2 contains all the necessary estimates to be used in the proofs of the new results stated in the introduction. Section 3 contains the proofs concerning the unique continuation results. Section 4 have the proof of the regularity statements. 
\section{Preliminary estimates}

In this section, we shall recall some results to be used in the proofs of the theorems stated in the introduction.
\begin{lem}\label{lem1}
 Let $p\in[1,\infty]$ and $s\geq 0$, with $s$ not an odd integer in the case $p=1$. Then 
 \begin{equation}
\label{ineq3}
\|J_x^{s} (fg)\|_p \leq c (\|f\|_{p_1}\|J_x^{s}g\|_{p_2} +\|g\|_{q_1}\|J_x^{s}f\|_{q_2}),
\end{equation}
and
\begin{equation}
\label{ineq2}
\|D_x^{s} (fg)\|_p \leq c (\|f\|_{p_1}\|D_x^{s}g\|_{p_2} +\|g\|_{q_1}\|D_x^{s}f\|_{q_2})
\end{equation}
with $1/p_1+1/p_2=1/q_1+1/q_2=1/p$.

\end{lem}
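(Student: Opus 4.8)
The inequalities \eqref{ineq3} and \eqref{ineq2} are the (fully general) Kato--Ponce fractional Leibniz rule, so the plan is to recall the standard paraproduct proof and to indicate where the endpoint exponents are settled in the literature. First I would reduce \eqref{ineq3} to \eqref{ineq2}: since $\langle\xi\rangle^s\le c(1+|\xi|^s)$ and $|\xi|^s\langle\xi\rangle^{-s}$ is (away from the origin, and after a harmless low-frequency cut-off) a H\"ormander--Mikhlin multiplier for $s\ge 0$, one has $\|J_x^s h\|_p\sim \|h\|_p+\|D_x^s h\|_p$; combining this with H\"older's inequality $\|fg\|_p\le\|f\|_{p_1}\|g\|_{p_2}$ (applied to the ``$\|h\|_p$'' part) reduces everything to \eqref{ineq2}.

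For \eqref{ineq2} I would use the Bony decomposition $fg=\Pi_f g+\Pi_g f+R(f,g)$, with $\Pi_f g=\sum_j S_{j-3}f\,\Delta_j g$ the low--high part, $\Pi_g f$ the high--low part, and $R(f,g)=\sum_{|j-k|\le 2}\Delta_j f\,\Delta_k g$ the resonant part. In $\Pi_f g$ the generic summand has Fourier support in a dyadic annulus of size $\sim 2^j$, so $\|\Delta_j D_x^s(\Pi_f g)\|_p\lesssim 2^{js}\|S_{j-3}f\|_{p_1}\|\Delta_j g\|_{p_2}\lesssim\|f\|_{p_1}\,2^{js}\|\Delta_j g\|_{p_2}$; summing with the Littlewood--Paley square-function characterization of $L^p$ and the Fefferman--Stein vector-valued maximal inequality gives $\|D_x^s(\Pi_f g)\|_p\lesssim\|f\|_{p_1}\|D_x^s g\|_{p_2}$, and $\Pi_g f$ is symmetric and yields $\|g\|_{q_1}\|D_x^s f\|_{q_2}$. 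For $R(f,g)$ the product $\Delta_j f\,\Delta_k g$ only has Fourier support in a ball of radius $\sim 2^j$, so one reorganizes the sum over output frequencies, $D_x^s R(f,g)=\sum_m\Delta_m D_x^s\big(\sum_{j\gtrsim m}\sum_{|k-j|\le 2}\Delta_j f\,\Delta_k g\big)$, and uses $\|\Delta_m D_x^s h\|_p\lesssim 2^{ms}\|h\|_p$ together with $s>0$ to sum the geometric factor $\sum_{m\lesssim j}2^{(m-j)s}$; this leaves $\lesssim\sum_j 2^{js}\|\Delta_j f\|_{p_1}\|\widetilde\Delta_j g\|_{p_2}$, which is dominated by $\|f\|_{p_1}\|D_x^s g\|_{p_2}$ (or by $\|g\|_{q_1}\|D_x^s f\|_{q_2}$) after another application of the square-function characterization. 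Note that $s>0$ is used precisely here, to make the resonant sum converge.

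The hard part will be the endpoints $p\in\{1,\infty\}$, where the $L^p$ square-function characterization fails. For $p=\infty$ I would instead run the estimates in $\dot B^s_{\infty,\infty}=\dot C^s$ and use $L^\infty\hookrightarrow\dot B^0_{\infty,\infty}$ for the low-frequency factors; for $p=1$ one passes through the Hardy space $H^1$ and its atoms, following the arguments of Grafakos and Oh and of Bourgain and Li, who proved exactly this general form with all of $p_1,p_2,q_1,q_2,p$ in $[1,\infty]$. The restriction that $s$ not be an odd integer when $p=1$ is genuinely necessary: for $s$ an even integer $D_x^s=(-\partial_x^2)^{s/2}$ is a local differential operator and \eqref{ineq2} is the ordinary Leibniz rule, for $s$ non-integer the Littlewood--Paley argument above is unobstructed, but for $s$ an odd integer $D_x^s$ is a composition with the Hilbert transform, which is unbounded on $L^1$, and the estimate fails. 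Since in the applications below only $p\in\{2,\infty\}$ (and non-endpoint, non-exceptional values of $s$) occur, in the write-up I would simply invoke Kato--Ponce for $1<p<\infty$ and Grafakos--Oh / Bourgain--Li for the endpoint exponents, referring to the sketch above for the underlying mechanism.
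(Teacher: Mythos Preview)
The paper does not prove this lemma at all: immediately after the statement it simply writes ``For the proof of Lemma~\ref{lem1} with $p\in(1,\infty)$ we refer to \cite{KaPo}, \cite{KPV93} and \cite{GO}. The case $r=p_1=p_2=q_1=q_2=\infty$ was established in \cite{BoLi}, see also \cite{GMN}. The case $p=p_2=q_2=1$ was settled in \cite{OW}.'' So your proposal already goes well beyond what the paper does; your paraproduct sketch is exactly the mechanism behind the cited references, and your plan to invoke the literature at the endpoints is precisely the paper's own strategy.

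Two small points if you keep the sketch. First, your reduction of \eqref{ineq3} to \eqref{ineq2} via $\|J^s h\|_p\sim\|h\|_p+\|D^s h\|_p$ uses Mihlin--H\"ormander, which fails at $p\in\{1,\infty\}$; at those endpoints the $J^s$ and $D^s$ versions have to be handled in parallel (the cited papers do this directly), so the reduction is not quite clean there. Second, your attribution at the endpoints is slightly off: Bourgain--Li \cite{BoLi} (and Grafakos--Maldonado--Naibo \cite{GMN}) settle the $p=\infty$ endpoint, while the $p=1$ endpoint with the odd-integer exclusion is due to Oh--Wu \cite{OW}, not to Grafakos--Oh or Bourgain--Li. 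Also note that your suggestion to run the $p=\infty$ case through $\dot B^s_{\infty,\infty}$ only yields a Besov-norm bound, which is strictly weaker than the genuine $L^\infty$ bound on $D_x^s(fg)$ proved in \cite{BoLi}; this is exactly why that endpoint required a separate argument. None of this affects the applications in the paper, which only use $p\in\{1,2,\infty\}$ and are fully covered by the cited references.
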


For the proof of Lemma \ref{lem1} with $p\in(1,\infty)$ we refer to  \cite{KaPo}, \cite{KPV93} and \cite{GO}. The case $r=p_1=p_2=q_1=q_2=\infty$ was established in \cite{BoLi}, see also \cite{GMN}. 
The case $p=p_2=q_2=1$ was settled in \cite{OW}.

Also, we shall use the following estimate:

\begin{prop}\label{prop1}

Let $k\in\mathbb Z,\,k\geq 0$ and $\theta\in[0,1]$. Then
 \be
\label{ineq4}
\|fg\|_{C^{k,\theta}} \leq c \;
(\|f\|_{\infty} \| g\|_{C^{k,\theta}}
 +\|f\|_{C^{k,\theta}}\|g\|_{\infty}).
\ee
and
 \be
\label{ineq5}
\|f\ast g\|_{C^{k,\theta}} \leq c \;
\|f\|_1 \| g\|_{C^{k,\theta}}.
\ee
\end{prop}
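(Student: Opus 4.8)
The plan is to prove the two estimates \eqref{ineq4} and \eqref{ineq5} essentially by hand, reducing to the definition of the H\"older norm \eqref{holder-norm}, since $C^{k,\theta}(\mathbb R)$ is not built on an $L^p$ structure and there is no Fourier-side multiplier theorem to lean on. I would begin with \eqref{ineq4}. Write $\|fg\|_{C^{k,\theta}} = \sum_{j=0}^k\|(fg)^{(j)}\|_\infty + \sup_{h\neq 0}\||h|^{-\theta}\Delta_h (fg)^{(k)}\|_\infty$, where $\Delta_h F = F(\cdot+h)-F(\cdot)$. For the first (finitely many) sup-norm terms, apply the Leibniz rule $(fg)^{(j)}=\sum_{i=0}^j\binom{j}{i} f^{(i)} g^{(j-i)}$ and bound each product in $L^\infty$ by $\|f^{(i)}\|_\infty\|g^{(j-i)}\|_\infty \le \|f\|_{C^{k,\theta}}\|g\|_{C^{k,\theta}}$; to get the asymmetric form on the right-hand side of \eqref{ineq4} one keeps, for each product, either the factor with no derivatives falling on $f$ (giving $\|f\|_\infty\|g\|_{C^{k,\theta}}$) or no derivatives on $g$, and absorbs the mixed terms by interpolation of the intermediate derivatives between $\|\cdot\|_\infty$ and $\|\cdot\|_{C^{k,\theta}}$ — i.e. the Landau–Kolmogorov inequality $\|f^{(i)}\|_\infty \le c\|f\|_\infty^{1-i/(k+\theta)}\|f\|_{C^{k,\theta}}^{i/(k+\theta)}$ and Young's inequality $ab\le a^p/p+b^{p'}/p'$. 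For the H\"older seminorm term, again expand $\Delta_h(fg)^{(k)}$ via Leibniz, and for each summand $\Delta_h(f^{(i)}g^{(k-i)})$ use the telescoping identity
\[
\Delta_h(f^{(i)}g^{(k-i)}) = (\Delta_h f^{(i)})\, g^{(k-i)}(\cdot+h) + f^{(i)}(\cdot)\,(\Delta_h g^{(k-i)}).
\]
When $i=k$ the first term contributes $|h|^\theta[f^{(k)}]_{C^\theta}\|g\|_\infty$; when $i<k$ we estimate $\|\Delta_h f^{(i)}\|_\infty \le |h|\,\|f^{(i+1)}\|_\infty \le |h|^\theta \cdot |h|^{1-\theta}\|f^{(i+1)}\|_\infty$, which for $|h|\le 1$ gives $|h|^\theta \|f^{(i+1)}\|_\infty$ and for $|h|\ge 1$ we instead use $\|\Delta_h f^{(i)}\|_\infty\le 2\|f^{(i)}\|_\infty \le 2|h|^\theta\|f^{(i)}\|_\infty$; either way the seminorm contribution is controlled by $\|f\|_{C^{k,\theta}}\|g\|_{C^{k,\theta}}$, and then the interpolation trick again produces the asymmetric form \eqref{ineq4}.

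For \eqref{ineq5}, the key observation is that the derivative and the difference operator both commute with convolution: $(f\ast g)^{(j)} = f\ast g^{(j)}$ and $\Delta_h(f\ast g)^{(k)} = f\ast (\Delta_h g^{(k)})$. Hence by Young's inequality $\|(f\ast g)^{(j)}\|_\infty = \|f\ast g^{(j)}\|_\infty \le \|f\|_1\|g^{(j)}\|_\infty$ for $0\le j\le k$, and $\||h|^{-\theta}\Delta_h(f\ast g)^{(k)}\|_\infty = \||h|^{-\theta} f\ast\Delta_h g^{(k)}\|_\infty \le \|f\|_1\,\||h|^{-\theta}\Delta_h g^{(k)}\|_\infty$. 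Summing over $j$ and taking the sup over $h$ yields \eqref{ineq5} directly, with no interpolation needed — this is the easy half.

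The main obstacle is the asymmetric (bilinear, "tame") form of \eqref{ineq4}: a naive Leibniz expansion only gives $\|fg\|_{C^{k,\theta}}\le c\|f\|_{C^{k,\theta}}\|g\|_{C^{k,\theta}}$, and extracting the sharper bound with $\|f\|_\infty$ (resp. $\|g\|_\infty$) on one factor in every term requires the Gagliardo–Nirenberg/Landau–Kolmogorov interpolation of the intermediate derivatives $\|f^{(i)}\|_\infty$, $0<i<k+\theta$, between $\|f\|_\infty$ and $\|f\|_{C^{k,\theta}}$, which I would either cite (it is classical on $\mathbb R$) or prove in a line via a rescaling argument. A secondary nuisance is bookkeeping the two ranges $|h|\le 1$ and $|h|\ge 1$ in the H\"older-seminorm estimate so as to convert plain derivative bounds into $|h|^\theta$-weighted ones uniformly; this is routine but must be done carefully when $\theta\in\{0,1\}$, the endpoint cases, where one should check the statement reduces correctly to the Leibniz/Young estimates for $W^{k,\infty}$ and $C^{k+1}\cap W^{k+1,\infty}$ respectively.
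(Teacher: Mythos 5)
The paper states Proposition \ref{prop1} without proof (it is invoked as a standard estimate), so there is no in-paper argument to compare against; your proposal must stand on its own. Your treatment of \eqref{ineq5} is complete and correct: differences and derivatives commute with convolution, and Young's inequality does the rest. Your overall strategy for \eqref{ineq4} (Leibniz plus telescoping plus Landau--Kolmogorov interpolation and AM--GM) is also the right one, and it closes without any interpolation for the sup-norm terms of order $j\le 1$ and, with interpolation, for all the $W^{k,\infty}$ terms, since there the orders of differentiation in each Leibniz summand add up to $j\le k\le k+\theta$, so the interpolation exponents $\tfrac{i}{k+\theta}+\tfrac{j-i}{k+\theta}\le 1$ and Young's inequality yields the tame right-hand side.

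There is, however, a concrete lossy step in your H\"older-seminorm estimate that prevents the final interpolation from closing when $\theta\in[0,1)$. For $i<k$ you bound $\|\Delta_hf^{(i)}\|_\infty\le |h|\,\|f^{(i+1)}\|_\infty\le |h|^\theta\|f^{(i+1)}\|_\infty$ (for $|h|\le 1$), which leaves you with the product $\|f^{(i+1)}\|_\infty\|g^{(k-i)}\|_\infty$, an object of total order $(i+1)+(k-i)=k+1>k+\theta$. The interpolation exponents then sum to $\tfrac{k+1}{k+\theta}>1$, and AM--GM cannot convert such a product into $\|f\|_\infty\|g\|_{C^{k,\theta}}+\|f\|_{C^{k,\theta}}\|g\|_\infty$; indeed the intermediate quantity is genuinely not controlled by the tame right-hand side (take $f,g$ oscillating at a common frequency $M$ with $\|f\|_\infty=\|g\|_\infty=1$: the left side scales like $M^{k+1}$ while the right side scales like $M^{k+\theta}$). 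The repair is a one-line sharpening of the same idea: instead of splitting $|h|\le 1$ versus $|h|\ge 1$, interpolate the minimum,
\begin{equation*}
\|\Delta_hf^{(i)}\|_\infty\le\min\bigl(2\|f^{(i)}\|_\infty,\,|h|\,\|f^{(i+1)}\|_\infty\bigr)\le c\,|h|^\theta\,\|f^{(i)}\|_\infty^{1-\theta}\|f^{(i+1)}\|_\infty^{\theta},
\end{equation*}
which is an object of order $i+\theta$; paired with $\|g^{(k-i)}\|_\infty$ of order $k-i$ the total is exactly $k+\theta$, the exponents sum to one, and Young's inequality now produces the asymmetric form. With that substitution (and the symmetric treatment of the terms $f^{(i)}\Delta_hg^{(k-i)}$, which you leave implicit) your proof is complete; the endpoint $\theta=1$ is the only case where your original bound already has the correct order.
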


Next, we recall some properties of the Bessel potential. To simplify the exposition we restrict ourselves to the one dimensional case, $n=1$. For $s>0$ 
\be
\label{2-1}
J^{-s}f(x)=(1-\partial_x^2)^{-s/2}f(x)=(G_s\ast f)(x)
\ee
where
\be
\label{2-2}
G_s(x)=c_s\;
\int_0^{\infty}\,\frac{e^{-\frac{\pi x^2}{y}-\frac{y}{4\pi}}\;dy}
{y^{1+(1-s)/2}}
\ee

Thus, for any $p\in[1,\infty]$
\be
\label{2-3}
\begin{cases}
\begin{aligned}
(a)\; \,&\;G_s\in L^p(\mathbb R),\,\;\;\;\;\;\;\,s>1-1/p,\\
(b)\; \,&\partial_xG_s\in L^p(\mathbb R),\;\;\;\;\;s>2-1/p.\\
\end{aligned}
\end{cases}
\ee

In particular, for $s=2$ (and $n=1$), one sees that  
\be
\partial_xG_2\in H^s(\mathbb R)\cap L^{\infty}(\mathbb R),\,s\in[0,1/2)\;\;\;\;\text{and}\;\;\;G_2\in C^{0,1}(\mathbb R),
\ee
since in this case $G_2(x)=e^{-|x|}/2\,$ and $\,\partial_xG_2(x)=- \sgn(x) e^{-|x|}/2$ .

Gathering this information one gets:

\begin{lem}\label{ineq1}
Let 
\be
\label{ope-phi}
\phi(D_x)=\partial_x(1-\partial_x^2)^{-1}=\partial_x J^{-2}.
\ee
Then for $p\in[1,\infty]$ 
 \be
 \label{key1}
 \|J_x^{\eta}\phi(D_x)f\|_p\leq c_{p,\theta}\|f\|_{q},
 \ee
 with $\eta\in [0,1-1/q+1/p)$ and $q\in[1,p]$.
 
 For $p\in(1,\infty)$, \eqref{key1} holds for $\eta=1$ and $q=p$.  Also for $k\in \Z, k\geq 0$ and $\theta\in[0,1]$ 
 \be\label{AA1}
  \|\phi(D_x)f\|_{C^{k,\theta}}\leq c_{k,\theta}\|f\|_{{C^{k,\theta}}},
 \ee
 Moreover, in all these cases, \eqref{key1} still holds with $J^{\theta}$ replaces by $D_x^{\theta}$. 
 \end{lem}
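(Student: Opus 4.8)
The plan is to reduce every assertion to Young's convolution inequality together with the size properties of the Bessel kernels recorded in \eqref{2-3}. Since $\,J^{\eta}\phi(D_x)=J^{\eta}\partial_xJ^{-2}=\partial_xJ^{-(2-\eta)}\,$ and, by \eqref{2-1}, $\,J^{-(2-\eta)}f=G_{2-\eta}\ast f\,$ whenever $\,0\le\eta<2$, one has
\be
J_x^{\eta}\phi(D_x)f=(\partial_xG_{2-\eta})\ast f .
\ee
Young's inequality then gives $\,\|J_x^{\eta}\phi(D_x)f\|_p\le\|\partial_xG_{2-\eta}\|_r\,\|f\|_q\,$ with $\,\tfrac1r=1+\tfrac1p-\tfrac1q$, and the hypothesis $\,q\le p\,$ is exactly what guarantees $\,r\ge1$. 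It remains to decide when $\,\partial_xG_{2-\eta}\in L^r(\R)$: by \eqref{2-3}(b) this holds precisely when $\,2-\eta>2-\tfrac1r$, i.e. when $\,\eta<\tfrac1r=1-\tfrac1q+\tfrac1p$, which is exactly the stated range $\,\eta\in[0,1-1/q+1/p)$; this establishes \eqref{key1} in that range. For the extra case $\,\eta=1,\ q=p\in(1,\infty)\,$ the kernel $\,\partial_xG_1\,$ fails to lie in any $L^r(\R)$, so instead I would use the Mikhlin--H\"ormander multiplier theorem: the symbol of $\,J\phi(D_x)=\partial_xJ^{-1}\,$ is $\,2\pi i\xi\,(1+4\pi^2\xi^2)^{-1/2}$, which is bounded and satisfies $\,\sup_{\xi\neq0}|\xi|\,|\partial_\xi(\,\cdot\,)|<\infty$, hence is $L^p$-bounded for $\,1<p<\infty$.

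For the H\"older estimate \eqref{AA1} I would simply write $\,\phi(D_x)f=(\partial_xG_2)\ast f\,$ with $\,\partial_xG_2(x)=-\tfrac12\,\sgn(x)\,e^{-|x|}\in L^1(\R)\,$ and invoke the convolution bound \eqref{ineq5} of Proposition \ref{prop1}, which gives $\,\|\phi(D_x)f\|_{C^{k,\theta}}\le c\,\|\partial_xG_2\|_1\,\|f\|_{C^{k,\theta}}$.

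For the statements with $\,D_x^{\theta}\,$ in place of $\,J^{\theta}$, the plan is to factor $\,D_x^{\eta}\phi(D_x)=(D_x^{\eta}J^{-\eta})\circ(J^{\eta}\phi(D_x))\,$ and to combine the bounds just obtained for $\,J^{\eta}\phi(D_x)\,$ with the fact that $\,D_x^{\eta}J^{-\eta}\,$ is bounded on $\,L^p(\R)\,$ for every $\,p\in[1,\infty]\,$ and on $\,C^{k,\theta}(\R)$. Establishing this last fact at the endpoints $\,p=1,\infty\,$ and on the H\"older spaces is the step I expect to be the main obstacle, since no multiplier theorem is available there. My argument would be: the symbol of $\,D_x^{\eta}J^{-\eta}\,$ equals $\,(1-\widehat{G_2}(\xi))^{\eta/2}\,$ with $\,G_2\ge0,\ \int G_2=1$, so $\,e^{-t(1-\widehat{G_2}(\xi))}=\widehat{P_t}(\xi)\,$ where $\,P_t=e^{-t}\sum_{n\ge0}\tfrac{t^n}{n!}G_2^{\ast n}\,$ is a (compound Poisson) probability measure; feeding this into the subordination formula $\,\lambda^{\eta/2}=c_{\eta}\int_0^{\infty}(1-e^{-t\lambda})\,t^{-1-\eta/2}\,dt\,$ (valid for $\,0<\eta\le1,\ \lambda\ge0$; the case $\,\eta=0\,$ being trivial) exhibits $\,(1-\widehat{G_2})^{\eta/2}\,$ as the Fourier transform of the finite signed measure $\,c_{\eta}\int_0^{\infty}(\delta-P_t)\,t^{-1-\eta/2}\,dt\,$ — finite because $\,\|\delta-P_t\|=O(t)\,$ as $\,t\to0^+\,$ and $\,\le2\,$ for $\,t\ge1$ — so $\,D_x^{\eta}J^{-\eta}\,$ is convolution with a finite measure and is therefore bounded on all $\,L^p(\R)\,$ and on all $\,C^{k,\theta}(\R)$. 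Alternatively, one can bypass the factorization and check directly that the convolution kernel of $\,D_x^{\eta}\phi(D_x)\,$ is $\,O(|x|^{-\eta})\,$ near the origin and lies in $\,L^r(\R)\,$ away from the origin for every $\,r\ge1$, hence lies in $\,L^r(\R)\,$ exactly when $\,\eta<1/r$, and then run Young's inequality as above; in the endpoint case $\,\eta=1,\ q=p\,$ one uses Mikhlin once more, the symbol $\,i\,\sgn(\xi)\,4\pi^2\xi^2(1+4\pi^2\xi^2)^{-1}\,$ being a H\"ormander multiplier on $\,L^p(\R),\ 1<p<\infty$.
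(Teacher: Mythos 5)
Your argument is correct and follows essentially the same route as the paper: the identity $J^{\eta}\phi(D_x)f=\partial_xG_{2-\eta}\ast f$ combined with \eqref{2-3}(b) and Young's inequality for the range $\eta\in[0,1-1/q+1/p)$, the Mikhlin--H\"ormander theorem for the endpoint $\eta=1$, $q=p\in(1,\infty)$, and the boundedness of $D_x^{\eta}J^{-\eta}$ to pass from $J^{\eta}$ to $D_x^{\eta}$. The only difference is that where the paper simply cites Stein (Chapter VI) for the $L^p$-boundedness of $D_x^{\eta}J^{-\eta}$ for all $p\in[1,\infty]$, you supply a self-contained subordination/compound-Poisson argument showing this operator is convolution with a finite measure; that is a correct (and slightly more informative) substitute for the citation.
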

\vskip.1in
\begin{proof}[Proof of Lemma \ref{ineq1}]

We observe that for $\theta\in [0,1]$
$$
J^{\theta}\phi(D_x)f(x) =(\partial_x G_{2-\theta}\ast f)(x).
$$
 \eqref{key1} follows by combining \eqref{2-3}(b) and Young's inequality. In the case $p\in(1,\infty)$ and $\theta=1$, the estimate in \eqref{key1} is a consequence of  Mihlin-H\"ormander multiplier  theorem.  Finally,  in the one-dimensional case, $n=1$, for any $s>0$ and any $p\in[1,\infty]$
$$
D_x^s J^{-s} :L^p(\mathbb R)\to L^p(\mathbb R)
$$
is bounded, see \cite{St} Chapter VI, one concludes the proof of the lemma.
\end{proof}

\begin{rem} Combining \eqref{2-3} (a) and
\be
\label{id}
\partial_x \phi(D_x)=J^{-2}-I,
\ee
one gets that
\be
 \label{key1b}
 \|\partial_x\phi(D_x)f\|_p\leq c \|f\|_{p},\;\;\;\;\;p\in[1,\infty],
 \ee
 and
 \be
 \label{key1c}
 \|\partial_x\phi(D_x)f\|_{C^{k,\theta}}\leq c \|f\|_{C^{k,\theta}},\;\;\;\;\;k\in\mathbb Z, \,k\geq 0,\,\theta\in[0,1],
 \ee
 which will be also used on the next section.
\end{rem}

In the periodic case, one has that for  $\,h\in L^2(\mathbb S)$, then $$
\phi(D_x)h(x)=\partial_x(1-\partial_x^2)^{-1}h(x)= (\partial_x \mathcal G\ast h)(x)
$$
where
\begin{equation}
\label{green}
\mathcal G(x)=\frac{ \cosh(x-\lfloor x\rfloor-1/2)}{2\,\sinh(1/2)},\;\;\;\;x\in\R/\Z\cong \mathbb S,
\end{equation}
and $\lfloor \cdot\rfloor\,$ denotes the greatest integer function. 

Thus, here $\,\mathcal G(x)\,$ plays the role of the (Green) function $\,G_2(x)=e^{-|x|}/2$ on the line (for $\,(1-\partial_x^2)$). All the estimates in \eqref{2-3}-\eqref{key1c} still holds with $\,\mathcal G(x)\,$ instead of $G_2$.

Next, we recall the following result and its proof (since we shall use some modifications of it) obtained in  \cite{LiPo}. This  will be a key ingredient in the proof of our unique continuation results.

\begin{lem}[\cite{LiPo}]\label{key-uc}

(a) Let $f\in L^p(\mathbb R)$ with $p\in [1,\infty]$ such that there exists a non-empty interval $[a,b]$ such that 
\be
\label{assumption}
 f(x)\leq 0\;\;a.e.\;\;x\in [a,b]\;\;\;\text{and}\;\;\;f(x)\geq 0, \;\;a.e.\;\;x\in [a,b]^c.
\ee
Then the function
\be
\label{capF}
F(x)=\phi(D_x)f(x)=\partial_x(1-\partial_x^2)^{-1}f(x)=-\text{sgn}(\cdot)\,\frac{e^{-|\cdot|}}{2}\ast f(x)
\ee
satisfies that $F(b)\geq F(a)$ with the equality holding only when $f(x)=0, \,a.e.\;x\in\mathbb R$.

(b) The results in part (a) still hold in the periodic setting, i.e.  for $f\in L^p(\mathbb S)$.
\end{lem}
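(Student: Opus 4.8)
The plan is to express $F(b)-F(a)$ as a single integral of $f$ against an explicit kernel and then to read off the sign of that kernel on each of the three regions cut out by the interval $[a,b]$. Since $\partial_x G_2(x)=-\sgn(x)e^{-|x|}/2$, we have $F=\partial_x G_2\ast f$, so
\be
F(b)-F(a)=-\frac12\int_{\R}K(y)\,f(y)\,dy,\qquad K(y):=\sgn(b-y)\,e^{-|b-y|}-\sgn(a-y)\,e^{-|a-y|}.
\ee
Everything then reduces to checking that $-\tfrac12 K(y)f(y)\ge 0$ for a.e.\ $y$, which is forced once the sign of $K$ is matched against the sign hypothesis \eqref{assumption} on $f$.

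First I would compute $K$ directly on the three intervals: for $y<a$ both translates are positive and $K(y)=e^{y}(e^{-b}-e^{-a})<0$; for $a<y<b$ one finds $K(y)=e^{y-b}+e^{a-y}>0$; and for $y>b$ both translates are negative and $K(y)=e^{-y}(e^{a}-e^{b})<0$. By \eqref{assumption}, $f\ge 0$ a.e.\ on $[a,b]^c$, where $K<0$, and $f\le 0$ a.e.\ on $(a,b)$, where $K>0$; hence $-\tfrac12 K(y)f(y)\ge 0$ a.e.\ and integration yields $F(b)\ge F(a)$. For the rigidity statement I would note that $K(y)\neq 0$ for every $y\neq a,b$, so $F(b)=F(a)$ forces $K(y)f(y)=0$ for a.e.\ $y$, i.e.\ $f\equiv 0$ a.e.

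For part (b) I would repeat the computation with the periodic Green function $\mathcal G$ in place of $G_2$. Identifying $\mathbb S\cong[0,1)$ and taking $[a,b]\subset[0,1)$, we have $\partial_x\mathcal G(x)=\sinh(x-\lfloor x\rfloor-1/2)/(2\sinh(1/2))$; after reducing $b-y$ and $a-y$ modulo $1$ into $(0,1)$, the kernel $K(y)=\partial_x\mathcal G(b-y)-\partial_x\mathcal G(a-y)$ is evaluated on each of $(a,b)$, $(0,a)$, $(b,1)$ using $\sinh A-\sinh B=2\cosh\tfrac{A+B}{2}\sinh\tfrac{A-B}{2}$. The factor $\sinh\tfrac{A-B}{2}$ then has argument $\tfrac{(b-a)-1}{2}<0$ on $(a,b)$ and $\tfrac{b-a}{2}>0$ on $(0,a)\cup(b,1)$, so again $K<0$ on $(a,b)$ and $K>0$ on $[a,b]^c$, and the conclusion together with the rigidity statement follows exactly as in part (a). The one step that needs genuine care is precisely this reduction modulo $1$ — placing $b-y$ and $a-y$ in the fundamental domain $(0,1)$ before differentiating $\mathcal G$ and checking that the resulting endpoints remain inside $(0,1)$, which uses $0<b-a<1$; once that bookkeeping is in place, the hyperbolic subtraction identity makes every sign manifest and there is no further obstacle.
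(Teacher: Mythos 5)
Your proposal is correct and follows essentially the same route as the paper: both reduce $F(b)-F(a)$ to an integral of $f$ against the kernel difference $\partial_xG_2(b-\cdot)-\partial_xG_2(a-\cdot)$ (resp. $\partial_x\mathcal G(b-\cdot)-\partial_x\mathcal G(a-\cdot)$ in the periodic case) and check its sign on $(a,b)$ versus $[a,b]^c$, which is exactly the content of \eqref{0006}--\eqref{006b} and \eqref{007}--\eqref{007a}. Your version simply writes out the explicit three-region computation and the hyperbolic-sine subtraction identity that the paper leaves to the reader, and the sign conclusions agree with the paper's in every region.
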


\begin{rem}

We observe that the value of $F$ defined in \eqref{capF} does not change if one replaces in its definition $f$ by $f+\alpha$ with $\,\alpha\in\mathbb R$.
\end{rem}
\begin{proof}

(a) By hypothesis $F$ is continuous so its evaluation makes sense at any point. Now, we observe that
\be
\label{0006}
-\sgn(a-y)e^{-|a-y|}< -\sgn(b-y)e^{-|b-y|}\;\;\;\;\;\text{if}\;\;\;\;y\notin (a,b),
\ee
and
\be
\label{006b}
-\sgn(a-y)e^{-|a-y|}> -\sgn(b-y)e^{-|b-y|}\;\;\;\;\;\text{if}\;\;\;\;y\in (a,b).
\ee

This combined with the definition of $F$ in  \eqref{capF} yields the desired result.

(b) It follows by just checking that if $0<a<b<1$, then
\begin{equation}
\label{007}
\partial_x\mathcal G(b-y)>\partial_x\mathcal G(a-y),\;\;\;\;\;y\in[0,1]-[a,b].
\end{equation}
and
\begin{equation}
\label{007a}
\partial_x\mathcal G(b-y)<\partial_x\mathcal G(a-y),\;\;\;\;\;y\in[a,b],
\end{equation}
\end{proof}
with
\be
\label{007c}
\partial_x\mathcal G(x)=\frac{ \sinh(x-\lfloor x\rfloor-1/2)}{2\,\sinh(1/2)}.
\ee
\section{Proof of the Unique Continuation Results}

\vskip.1in
\begin{proof}[Proof of Theorem \ref{THA1}]
We recall the notation in \eqref{ope-phi}
$$
\phi(D_x)=\partial_x(1-\partial^2)^{-1}=\partial_x J^{-2}.
$$
Hence,
\be
\label{eq-facil}
\partial_tu+\phi(D_x)\big(u+\frac{u^2}{2}\big)=0,
\ee
and
\be
\label{facil2}
u(t)-u_0=\int_0^t\phi(D_x)\big(u+\frac{u^2}{2}\big)(t')dt'.
\ee

Thus, for $u_0\in C^{k,\theta}(\mathbb R)$ we define the operator
\be
\label{defi1}
\Psi_{u_0}v(x,t)=u_0(x)+\int_0^t\phi(D_x)\big(v+\frac{v^2}{2}\big)(t')dt'
\ee
on
\be
\label{defi2}
X_T^{k,\theta}(\mathbb R)=\{ v:\mathbb R\times[-T,T]\to \mathbb R\,:\,v\in C([-T,T]:C^{k,\theta}(\mathbb R))\}.
\ee

A combination of  \eqref{ineq4}, \eqref{ineq5} and \eqref{key1c} leads to the estimates
$$
\sup_{[-T,T]}\|\Psi_{u_0}v\|_{C^{k,\theta}}\leq c\,\big(\|u_0\|_{C^{k,\theta}} +T \sup_{[-T,T]}\|v\|_{C^{k,\theta}}\big(1+\sup_{[-T,T]}\|v\|_{C^{k,\theta}}\big)\big).
$$
and
$$
\sup_{[-T,T]}\|\Psi_{u_0}(v-w)\|_{C^{k,\theta}}\leq c\, T \sup_{[-T,T]}\|v-w\|_{C^{k,\theta}}\big(1+\sup_{[-T,T]}(\|v\|_{C^{k,\theta}}+\|w\|_{C^{k,\theta}})\big).
$$

Once that the above estimates are available by restricting the size of the time interval $[-T,T]$ with $T=T\big(|u_0\|_{C^{k,\theta}}\big)>0$ one can apply the contraction principle to the operator 
$\Psi_{u_0}$ in the domain $X_T^{k,\theta}(\mathbb R)$ to obtain the desired solution $u(x,t)$ with
\be
\label{result1}
u(x,t)=u_0(x)+\int_0^t\phi(D_x)\big(u+\frac{u^2}{2}\big)(t')dt',\;\;\;\;t\in[-T,T].
\ee

To get the regularity of the solution $u(x,t)$ stated in \eqref{class-holder} we take the time derivatives in \eqref{result1} and use Proposition \ref{prop1} 
to see that  $u\in C^1([-T,T]:C^{k,\theta}(\mathbb R))$. Next, we use the equation to deduce that
\be
\label{tt}
\begin{aligned}
\partial^2_{tt}u&=-\phi(D_x)(\partial_tu+u\partial_tu)\\
&=\phi(D_x)\Big(\phi(D_x)(u+u^2/2)+u\,\phi(D_x)(u+u^2/2)\Big).
\end{aligned}
\ee
Hence, Proposition \ref{prop1} shows that $u\in C^2([-T,T]: C^{k,\theta}(\mathbb R))$. An iteration of this argument yields  the desired result.

\end{proof}

\vskip.1in
\begin{proof}[Proof of Theorem \ref{THA2}]

The idea is to reduce the proof to that given for Lemma \ref{key-uc}. 

The equation in \eqref{bbm} holds pointwise, therefore evaluating it at $(a,t_0)$ and $(b,t_0)$, substracting these values, using the remark after Lemma \ref{key-uc} and the hypothesis one gets
\be
\label{cal1}
\begin{aligned}
&0=\partial_tu(b,t_0)-\partial_tu(a,t_0) \\
&+\int_{-\infty}^{\infty}\frac{ -\sgn(b-y)e^{-|b-y|}+\sgn(a-y)e^{-|a-y|}}{2}(u+\frac{u^2}{2})(y,t_0)dy\\
&=\partial_tu(b,t_0)-\partial_tu(a,t_0) \\
&+\int_{-\infty}^{\infty}\frac{-\sgn(b-y)e^{-|b-y|}+\sgn(a-y)e^{-|a-y|}}{4}(u+1)^2(y,t_0)dy\\
&=\partial_tu(b,t_0)-\partial_tu(a,t_0)\\
& +\int_{-\infty}^a(-e^{-|b-y|}+e^{-|a-y|})\frac{(u+1)^2}{4}(y,t_0)dy\\
& +\int_{a}^b(-e^{-|b-y|}-e^{-|a-y|})\frac{(u+1)^2}{4}(y,t_0)dy\\
& +\int^{\infty}_b(e^{-|b-y|}-e^{-|a-y|})\frac{(u+1)^2}{4}(y,t_0)dy\\
&=A_1+A_2+A_3+A_4.
\end{aligned}
\ee

By hypothesis $A_1\geq 0$ and $A_3=0$. Since the integrands in $A_2$ and $A_4$ are non-negative, see \eqref{0006},  the equality in \eqref{cal1} only holds if $A_1=A_2=A_4=0$. But this implies that
$$
u(x,t_0)\equiv -1\;\;\;\;\;\forall x\,\in\R\;\;\;\;\;\text{and}\;\;\;\;\;\;\partial_tu(b,t_0)=\partial_tu(a,t_0).
$$
\end{proof}
\vskip.05in
\begin{proof}{Proof of Corollary \ref{noL2}}
It follows directly from the proof of Theorem \ref{THA2} given above.
\end{proof}

\vskip.05in

\begin{proof}[Proof of Theorem \ref{THA3}]

We look for $u_0\in C^{\infty}(\R)-\{0\}$ such that 
$$
u_0(x)=c_0\neq -1,\;\;\;\;\;\;\partial_tu(x,0)=0,\;\;\;x\in [a,b],\;\;\;a<b.
$$

Fixing $a=0$ and recalling that $f(x)=x+x^2/2$  one has that if $x\in[0,b]$
\be
\label{beg1}
\begin{aligned}
&\partial_tu(x,0)= -\phi(D_x)f(u_0(x))=\int_{-\infty}^{\infty}e^{-|x-y|}\sgn(x-y)f(u_0(y))dy\\
&=e^{-x}\int_{-\infty}^0e^yf(u_0(y))dy+e^{-x}f(c_0)\int_0^xe^ydy\\
&\;\;-e^xf(c_0)\int_x^be^{-y}dy-e^x\int_b^{\infty}e^{-y}f(u_0(y))dy\\
&=e^{-x}(\alpha+f(c_0)(e^x-1))-e^x(\beta+f(c_0)(e^{-x}-e^{-b}))\\
&=e^{-x}(\alpha-f(c_0))+e^x(e^{-b}f(c_0)-\beta),
\end{aligned}
\ee
where
\be
\label{beg2}
\alpha=\int_{-\infty}^0e^yf(u_0(y))dy,\;\;\;\;\beta=\int_b^{\infty}e^{-y}f(u_0(y))dy.
\ee

Since $c_0\neq -1$, then $f(c_0)\in (-1/2,\infty)$  with $f(-1)=-1/2$ so 
$$
\alpha\in (-1/2,\infty)\;\;\;\;\;\;\;\;\;\;\;\beta\in\big(\frac{-e^{-b}}{2},\infty\big).
$$
Thus, for any fixed $b>0$, we can find $u_0\in C^{\infty}(\R)$ such that $u_0(0)=u_0(b)=c_0$ and $u_0(x)=0$ if $|x|\geq M>0$ for large enough $M=M(c_0)>0$. This finishes the proof of Theorem \ref{THA3}.
\end{proof}

\vskip.1in
\begin{proof}[Proof of Theorem \ref{THA4}]

Using the function $f(x)=x+x^2/2$ and the remark after Lemma \ref{key-uc} we write the equation in \eqref{bbm} as
\be
\label{minus-constant}
\partial_tu+\partial_x(1-\partial_x^2)^{-1} \big(f(u)-f(c_0)\big)=0.
\ee

At this point the proof follows the argument given in the proof of Theorem \ref{THA3}.

\end{proof}

\begin{proof}[Proof of Theorem \ref{THAP2}]

It follows a similar argument to that provided in details in the proof of Theorem \ref{THA3} but using \eqref{007} and \eqref{007a} instead of 
\eqref{0006} and \eqref{006b}.

\end{proof}

\vskip.1in
\begin{proof}[Proof of Theorem \ref{THAP3}]

We re-write the equation in \eqref{bbm-p} as
\be
\label{abc1}
\partial_tu+\phi(D_x)\big(f(u)-f(c_0)\big)=0,
\ee
with $f(x)=x+x^2/2$.

Let $a, b\in [0,1)\cong \mathbb S$ with $0<a<b<1$. Since $c_0\neq -1$ we can find $\psi_1,\,\psi_2\in C^{\infty}_0(\mathbb S)-\{0\}$ with $\text{supp}\, \psi_1\subset(0,a)$ and $\text{supp}\, \psi_2\subset (b,1)$ such that 
$$
\aligned
&f(\psi_1(x))< f(c_0),\;\;\;\;x\in (0,a),\\
&f(\psi_2(x))> f(c_0),\;\;\;\;x\in (b,1).
\endaligned
$$

Let
$$
v_j(x)=c_0+\psi_j(x),\;\;\;\;j=1,2.
$$
and define the operator
$$
Q:H^s(\mathbb S)\to \R,\;\;\;\;s>1/2,
$$
as
\be
\label{abc2}
\begin{aligned}
Q(u_0)\equiv &\partial_tu(b,0)-\partial_tu(a,0)\\
=-&\int_0^1 \big(\partial_x\mathcal G(b-y)-\partial_x\mathcal G(a,0)\big)(f(u(x,0))-f(c_0))dy
\end{aligned}
\ee
where $u\in C^{\infty}((-T,T):H^s(\mathbb S))$ is the solution of the IVP \eqref{bbm} provided by Theorem \ref{RoZh}.

Hence,  from \eqref{007}, \eqref{007a} and \eqref{007c} one has that
\be
\label{abc5}
Q(v_1)>0\;\;\;\;\;\text{and}\;\;\;\;\;Q(v_2)<0.
\ee

The operator $ Q$ defined in \eqref{abc2} is continuos. Therefore, by  considering
$$
Q(\lambda v_1+(1-\lambda)v_2),\;\;\;\;\;\;\lambda\in [0,1],
$$
one has that there exists $\lambda_0\in(0,1)$ such that for $v_0=\lambda_0v_1+(1-\lambda_0)v_2$ one has $Q(v_0)=0$, i.e.
\be
\label{abc3}
 \partial_tv(b,0)-\partial_tv(a,0)=0,
 \ee
 with $v(x,t)$ the solution corresponding to the data $v_0$ which satisfies $v_0(x)=c_0,\;x\in [a,b]$. This yields the desired result.

\end{proof}

\vskip.1in
\begin{proof}[Proof of Theorem  \ref{IVPCH2}]

It follows an  argument similar to that provided in details in the proof of Theorem \ref{THA2}. Hence, it will be omitted.
\end{proof}

\vskip.1in
\begin{proof}[Proof of Theorem  \ref{THAP4}]

It follows a similar argument to that provided in details in the proof of Theorem \ref{THA3} but using \eqref{007} and \eqref{007a} instead of 
\eqref{0006} and \eqref{006b}.

\end{proof}
\section{Proof of the Regularity Results}

\begin{proof}[Proof of Theorem \ref{TH1}]

We recall the notation in \eqref{ope-phi}
$$
\phi(D_x)=\partial_x(1-\partial^2)^{-1}=\partial_x J^{-2}.
$$
Hence,
\be
\label{eq-facil1}
\partial_tu+\phi(D_x)\big(u+\frac{u^2}{2}\big)=0,
\ee
and
\be
\label{facil21}
u(t)-u_0=\int_0^t\phi(D_x)\big(u+\frac{u^2}{2}\big)(t')dt'.
\ee

First, we consider the case $s\in[0,1/2)$.

Applying the operator $J_x^{(2s+1/2)^{-}} $ to the integral term in \eqref{facil21}, and combining Proposition \ref{ineq2},  \eqref{key1} with $(\eta,p,q)=((s+1/2)^-,2,2)$ and  $(\eta,p,q)=((1/r)^-,2,q)$ with $1+1/2=1/r+1/q$ and $s+1/2=1/r+s_2$, Lemma \ref{lem1} and Sobolev embedding theorem, one gets
\be
\label{E1}
\begin{aligned}
\| J^{(2s+1/2)^-}_x(u&(t)-u_0)\|_2\\
&\leq c\int_0^t (\|J_x^s u(t')\|_2+\|J_x^{s_2}(u^2)(t')\|_q)dt'\\
&\leq c\int_0^t (\|J_x^s u(t')\|_2+\|u(t')\|_r\|J_x^{s_2} u(t')\|_l)dt'\\
&\leq c\int_0^t (\|J^{s}_xu(t')\|_2+\|J^{s}_xu(t')\|^2_{2})dt',\\
\end{aligned}
\ee
with $1/q=1/r+1/l$, 
$
s-1/2=-1/r$ and $s-1/2=s_2-1/l$ such that
$$
1/r+s_2=1/r+2s+1/q-1=2s+1/2.
$$

This yields the desired result \eqref{m5}.

Next, we consider the case in \eqref{m6}, i.e.  $u_0\in H^{s}(\mathbb R)$ with $s>1/2$.

Applying the operator $J_x^{(2s+1)} $ to the integral term in \eqref{facil21}, and combining \eqref{key1} with $(\eta,p,q)=(1,2,2)$,  Lemma \ref{ineq1} and Sobolev embedding theorem, one gets
\be
\label{E2}
\begin{aligned}
\| J^{s+1}_x(u&(t)-u_0)\|_2\leq c\int_0^t (\|J_x^s u(t')\|_2+\|J_x^{s}(u^2)(t')\|_2)dt'\\
&\leq c\int_0^t (\|J_x^s u(t')\|_2+\|u(t')\|_{\infty}\|J_x^{s} u(t')\|_2)dt'\\
&\leq c\int_0^t (\|J^{s}_xu(t')\|_2+\|J^{s}_xu(t')\|^2_{2})dt',\\
\end{aligned}
\ee
which yields  \eqref{m6}.

To prove the regularity in the $t$-variable, we first show that if
$u, v\in C([-T,T]:H^{s}(\mathbb R))$, then 
\be
\label{reg1}
\begin{aligned}
&(i)\;\;\phi(D_x) u\in C([-T,T]:H^{s}(\mathbb R)),\\
&(ii)\;\phi(D_x)(uv)\in C([-T,T]:H^{s}(\mathbb R)).
\end{aligned}
\ee
We recall that
$$
\phi(D_x) f(x)=\partial_xG_2\ast f(x),
$$
with $\partial_xG_2(x)=-\sgn(x)e^{-|x|}/2\in L^1(\R)\cap L^2(\R)$. Thus
$$
\|J_x^s\phi(D_x) u\|_2\leq \|J_x^su\|_2
$$
and using Lemma \ref{lem1}
$$
\|J_x^s\phi(D_x) (uv)\|_2= \|\phi(D_x) J_x^s(uv)\|_2\leq \|J_x^s(uv)\|_1\leq c\|J^su\|_2\|J^sv\|_2
$$
which yields \eqref{reg1}.

 Since $u\in C([-T,T]:H^{s}(\mathbb R))$ the equation in \eqref{facil21} and \eqref{reg1}  shows that  $u\in C^1([-T,T]:H^{s}(\mathbb R))$. Next, we use \eqref{eq-facil1} to deduce that
\be
\label{ttt}
\begin{aligned}
\partial^2_{tt}u&=-\phi(D_x)(\partial_tu+u\partial_tu)\\
&=\phi(D_x)\Big(\phi(D_x)(u+u^2/2)+u\,\phi(D_x)(u+u^2/2)\Big).
\end{aligned}
\ee
Applying \eqref{reg1}  one gets that $u\in C^2([-T,T]: H^{s}(\mathbb R))$. An iteration of this argument yields  the proof of Theorem \ref{TH1} part(i).

To prove part (ii) one just needs to show that 
$$
\partial_x^{k+1}(u(x,t)-u_0(x))\in C^{0,\theta}(\mathbb R).
$$

Applying the operator $\partial_x^{k+1} $ to the integral term in \eqref{facil21}, and combining \eqref{key1c} and Proposition \ref{prop1} one gets
\be
\label{E3}
\begin{aligned}
\| \partial_x^{k+1}(u&(t)-u_0)\|_{C^{0,\theta}(\mathbb R)}\\
&\leq c\int_0^t (\|\partial_x^k u(t')\|_{C^{0,\theta}(\mathbb R)}+\|u^2(t')\|_{C^{k,\theta}(\mathbb R)})dt'\\
&\leq c\int_0^t (\|u(t')\|_{C^{k,\theta}(\mathbb R)}+\|u(t')\|_{\infty}\|u(t')\|_{C^{k,\theta}(\mathbb R)})dt'\\
\end{aligned}
\ee
which basically yields  \eqref{m5a}. This completes the proof of Theorem \ref{TH1}.

\end{proof}
\begin{proof}[Proof of Theorem \ref{TH1b}]

The proof is similar to that provided in details for Theorem \ref{TH1}. Therefore, it will be omitted.
\end{proof}

\begin{proof}[Proof of Theorem \ref{TH1c}]
By writing the solution $u=u(x,t)$ as
\be
\label{key-idea}
u(x,t)=(u(x,t)-u_0(x)) + u_0(x),
\ee
with $u_0\in H^s(\R)$, from  Theorem \ref{TH1} one has 
$$
u-u_0\in C^{\infty}([-T,T]:H^{s'}(\R)),
$$
with
\be
\label{best}
s'=
\begin{cases}
\begin{aligned}
& (2s+1/2)^-,\;s\in[0,1/2],\\
&\;\;\;s+1,\;\;\;\;\;\;\;\,\;s>1/2.
\end{aligned}
\end{cases}
\ee
Therefore, the regularity (and singularities) of $u_0$ at the level of $H^{\mu}(\Omega)$, with $\mu\in(s,s']$, are the same as those of $u=u(x,t)$ for all $t\in[-T,T]$.

\end{proof}

\begin{proof}[Proof of Theorem \ref{TH1d}]
The proof is similar to that provided in details for Theorem \ref{TH1}. Therefore, it will be omitted.

\end{proof}

\begin{proof}[Proof of Corollary \ref{cons}]

Combining Sobolev embedding theorem, the expression in \eqref{key-idea} and the results obtained in Theorem \ref{TH1}, one gets the desired result.

\end{proof}
\section{Declarations}

\subsection*{Author Contributions} Applicable for submissions with multiple authors): the authors contribute equally to
the manuscript.

\subsection*{Data Availibility} A statement on how any datasets used can be accessed: not applicable.

\subsection*{Conflict of interest} Always applicable and includes interests of a financial or personal nature:
the authors declare no conflict of interest.
\subsection*{Ethical Approval} Applicable for both human and/or animal studies. Ethical committees, Internal Review
Boards and guidelines followed must be named. When applicable, additional headings with statements on
consent to participate and consent to publish are also required: not applicable.

\bigskip

\end{document}